\newtheorem{theorem}{Theorem}[section]
\newtheorem{definition}[theorem]{Definition}
\newtheorem{lemma}[theorem]{Lemma}
\newtheorem{remark}[theorem]{Remark}
\newtheorem{proposition}[theorem]{Proposition}
\newtheorem{corollary}[theorem]{Corollary}
\newtheorem{example}[theorem]{Example}
\def\R{{\mathbb R}}
\def\C{{\mathbb C}}
\def\al{\alpha}
\def\be{\beta}
\def\DE{\Delta}
\def\de{\delta}
\def\ga{\gamma}
\def\GA{\Gamma}
\def\ve{\varepsilon}
\def\la{\lambda}
\def\om{\omega}
\def\va{\varphi}
\def\ta{\tau}
\def\Ga{\Gamma}
\def\si{\sigma}
\def\g{\mathfrak{g}}
\def\h{\mathfrak{h}}
\def\p{\mathfrak{p}}
\def\s{\mathfrak{s}}
\def\g{\mathfrak g}
\def\h{\mathfrak h}
\def\noi{\noindent}
\def \trans{\,{}^t\!}
\def\R{\mathbb{R}}
\def\C{\mathbb{C}}
\def\nn{\nonumber}
\def \sgn {\text{\rm sgn\,}}
\def\Id{{\mathbb I}}
\def\CC{{\mathcal C}}
\def\LL{{\mathcal L}}
\def\SS{{\mathcal S}}
\def\U{{\mathcal U}}
\def\V{{\mathcal V}}
\def\CC{{\mathcal C}}
\def\iy{\infty}
\def\hb#1{\hbox{#1}}
\def\val#1{\vert #1\vert}
\def\no#1{\Vert #1\Vert }
\def\noop#1{\Vert #1\Vert_{\rm op} }
\def\exp#1{\hb{exp}(#1)}
\def\res#1{\Big|}
\def\inv{^{-1}}
\def\hb #1{\hbox{#1}}
\def\hb#1{\hbox{#1}}
\def\val#1{\vert #1\vert}
\def\ti{\times}
\def\dim#1{\hb{dim}(#1)}
\def\-{(\pm)}
\def\L1#1{L^1(#1)}
\def\L#1#2{L^{#1}(#2)}
\def\l#1#2{L^{#1}(#2)}
\def\Im{\mathrm{\, Im \,}}
\def\Re{\mathrm{\, Re\, }}
\begin{document}

\title[Uniqueness of solutions to Schr\"odinger equations]
{Uniqueness of Solutions to Schr\"odinger equations on 2-step nilpotent Lie groups}
\author{Jean Ludwig, Detlef~M\"uller}
\thanks{The second author was supported by a one  month research invitation from the University
Paul Verlaine-Metz in 2010-2011} 
\thanks{2010 {\em Mathematical Subject Classification.}
43A80,22E30,22E25, 35B05}
\thanks{{\em Key words and phrases.}
Schr\"odinger equation, uniqueness, uncertainty principle, 2-step nilpotent Lie group,  oscillator semigroup}
\begin{abstract}
Let $ \g=\g_1\oplus\g_2,[\g,\g] =\g_2, $ be a nilpotent Lie algebra of step 2, $ V_1,\cdots, V_m $ a basis of $ \g_1 $ and $ L=\sum_{j,k}^{m}a_{jk}V_j V_k $ be a  left-invariant differential operator on $ G=\exp \g $, where $ (a_{jk})_{jk}\in M_n(\R) $ is symmetric. It is shown that if a solution $ w(t,x) $ to the  Schr\"odinger equation 
$\partial_t w(t,g)=i Lw(t,g), g\in G, t\in\R, 
 w(0,g)=f(g)$, satisfies a suitable  Gaussian type estimate at time  $t= 0 $ and at some time  $ t=T\ne 0 $, then $ w=0 $. The proof is based on Hardy's  uncertainty principle, on explicit computations within   Howe's oscillator semigroup and on the methods developed in \cite{MRAnn} and \cite{MRMaAn}. Our results extend the work by Ben Sa\"{i}d and Thangavelu \cite{ST}, in which the authors study the Schr\"odinger equation associated to the sub-Laplacian on the Heisenberg group.
\end{abstract}
\maketitle

\setcounter{equation}{0}
\section{Introduction}\label{intro}
In a recent preprint \cite{ST}, Ben Sa\"{i}d and Thangavelu have extended a uniqueness property for solutions to the free Schr\"odinger equation on Euclidean space to  the Schr\"odinger equation associated to the sub-Laplacian on the Heisenberg group (see Theorem \ref{ST}).  Their proof exploits in a crucial way the symmetry properties of the sub-Laplacian and  makes use  of a detailed  analysis  of the representation theory of the Heisenberg group $ \mathbb H_n,$  which eventually allows to reduce to  a variant of Hardy's  classical uncertainty principle  \cite{Hardy} for the Hankel transform. 

In this paper we shall generalize the result of Ben Sa\"{i}d and Thangavelu to every  step two nilpotent Lie group, and at the same time show that it holds true even  for a much wider  class of  second order left-invariant differential operators $ L $ on the corresponding Lie group. These operators need not  be hypoelliptic. Our proof is based on a quite different approach,  which at the same time appears conceptually simpler, as it allows in the end for a reduction directly to Hardy's classical theorem, and avoids cumbersome computations in irreducible representations.

\subsection{The Uncertainty Principle}
Various forms of the  \textit{Uncertainty Principle} have again been in the research focus  in recent years, for instance,   among others, in the papers  \cite{BDJ}, \cite{CF1},\cite{CEKKPV}, \cite{EKPV1},\cite{EKPV2}  and \cite{EKPV3}. 

Of importance to us will be  Hardy's incertainty principle  \cite{Hardy}. 
On $ \R^{n},  $ with Fourier transform
\begin{eqnarray}\label{}
\nn \hat f(\xi)&:=&\int_{\R^{n}}f(x)e^{{-i\xi\cdot x}}dx,\quad  f\in \l1 {\R^{n}},\quad  \xi\in \R^{n},
\end{eqnarray}
this principle states the following:
\begin{theorem}\label{Hardy}
Let $ \al,\be\in \R^*_+. $ Let $ g: \R^{n}\to \C $ be a measurable function such that for every $x$ respectively $\xi$ in $\R^n$
\begin{eqnarray}\label{}
\nn \val{g(x)}\leq C e^{\frac{-\val x^{2}}{4\al}},\quad \val{\hat g(\xi)}\leq Ce^{\frac{-\val \xi^{2}}{\be}}.
\end{eqnarray}
Then the following hold true: 
if $\al\be<1  $, then $ g=0,$ and if $ \al\be=1 $, then $ g(x)=c e^{\frac{-\val {x }^{2}}{4\al}}, \ x\in\R^{n} $.
 \end{theorem}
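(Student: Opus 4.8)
The plan is to reduce the statement to one dimension and then settle the one-dimensional case by complex-analytic means. For the reduction I would fix the splitting $x=(x_1,x')\in\R\times\R^{n-1}$ and, for each $\et\in\R^{n-1}$, introduce the partial transform $\phi_\et(x_1):=\int_{\R^{n-1}}g(x_1,x')e^{-i\et\cdot x'}\,dx'$. A direct computation gives $\widehat{\phi_\et}(\xi_1)=\hat g(\xi_1,\et)$, and the Gaussian hypotheses integrate to $\val{\phi_\et(x_1)}\le C'e^{-x_1^2/(4\al)}$ and $\val{\widehat{\phi_\et}(\xi_1)}\le C_\et e^{-\xi_1^2/\be}$. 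Thus each $\phi_\et$ satisfies the one-dimensional hypotheses with the same pair $(\al,\be)$. Granting the one-dimensional theorem, in the case $\al\be<1$ we get $\phi_\et\equiv0$ for every $\et$; since $\phi_\et(x_1)$ is the $x'$-transform of $g(x_1,\cdot)$, this forces $g=0$. In the borderline case $\al\be=1$ we get $\phi_\et(x_1)=c(\et)e^{-x_1^2/(4\al)}$, whence $g(x_1,x')=e^{-x_1^2/(4\al)}\psi(x')$ for some $\psi$; running the same slicing in the remaining variables shows $\psi$ is itself Gaussian and yields $g(x)=c\,e^{-\val x^2/(4\al)}$. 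So it suffices to treat $n=1$.

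For $n=1$ I would first observe that $\val{g(x)}\le Ce^{-x^2/(4\al)}$ makes $g$ integrable, so $\hat g$ extends to an entire function $\hat g(z)$, $z=\xi+iy$, by differentiating under the integral sign. Completing the square in $\int g(x)e^{-izx}\,dx$ gives the growth bound $\val{\hat g(\xi+iy)}\le C'e^{\al y^2}$, uniformly in $\xi$, while on the real axis we keep $\val{\hat g(\xi)}\le Ce^{-\xi^2/\be}$. The idea is now to absorb the admissible growth into a Gaussian multiplier. Choose $\ga$ with $\al\le\ga\le1/\be$ (taking $\ga=1/\be=\al$ in the borderline case, and $\al<\ga<1/\be$ in the strict case, which is possible precisely because $\al\be<1$), and set $h(z):=\hat g(z)e^{\ga z^2}$. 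Writing $z=\xi+iy$ and using $\val{e^{\ga z^2}}=e^{\ga(\xi^2-y^2)}$, the two estimates combine to
\[
\val{h(\xi+iy)}\le C'e^{\ga\xi^2}e^{(\al-\ga)y^2},\qquad \val{h(\xi)}\le Ce^{(\ga-1/\be)\xi^2}.
\]
Hence $h$ is bounded on every vertical line (the bound being $C'e^{\ga\xi^2}$, since $\al-\ga\le0$) and bounded on the real axis; in the strict case it even decays to $0$ along the real axis.

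The heart of the matter is to upgrade these bounds to global boundedness of $h$, after which Liouville's theorem forces $h$ to be constant. I would do this by a Phragm\'en--Lindel\"of argument in each of the four quadrants, where $h$ is controlled on the two bounding half-axes and satisfies the order-two bound $\val{h(z)}\le C'e^{\ga\val z^2}$. \textbf{This is the main obstacle:} the opening angle of a quadrant is exactly $\pi/2$, the critical angle for order two, so the naive Phragm\'en--Lindel\"of principle does not apply, and functions such as $e^{-i\ga z^2}$ show that boundedness on the two half-axes alone is genuinely insufficient. The extra ingredient that rescues the argument is precisely the \emph{vertical-line} boundedness found above: the extremal order-two solutions that stay bounded on the axes (the multiples of $e^{\pm i\ga z^2}$) are unbounded on vertical lines, hence excluded, and one concludes that $h$ is bounded throughout each quadrant. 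Thus $h$ is a bounded entire function, hence constant.

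Finally I would read off the conclusion. In the borderline case $h\equiv c$ gives $\hat g(\xi)=c\,e^{-\al\xi^2}$, and inverting the Fourier transform identifies $g$ as a multiple of $e^{-x^2/(4\al)}$, as claimed. In the strict case $\al\be<1$ the constant must vanish, because $h$ tends to $0$ along the real axis, so $\hat g\equiv0$ and therefore $g=0$. Combined with the reduction of the first paragraph, this establishes both assertions in all dimensions.
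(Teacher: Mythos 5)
The paper itself does not prove Theorem \ref{Hardy}: it quotes it from \cite{Hardy} and points to \cite{CEKKPV} for a real-variable proof, so your attempt must be measured against the classical complex-analytic argument, whose skeleton you reproduce. Your reduction to $n=1$ is sound (in the borderline case the coefficient $c(\et)$ inherits Gaussian decay from $\widehat{\phi_\et}(0)=\hat g(0,\et)$, so the inversion in $x'$ is legitimate), as are the entire extension of $\hat g$ with $\val{\hat g(\xi+iy)}\le C'e^{\al y^2}$, the multiplier $e^{\ga z^2}$, and the Liouville endgame. The genuine gap is exactly where you flag the main obstacle and then wave it away: ``the extremal order-two solutions \dots are unbounded on vertical lines, hence excluded, and one concludes that $h$ is bounded'' is not an inference that any theorem supports. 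There is no Phragm\'en--Lindel\"of principle of the form ``boundedness on the bounding rays of a critical-angle sector, plus exclusion of the extremal examples, implies boundedness''; the estimate $\sup_y\val{h(\xi+iy)}\le C'e^{\ga\xi^2}$ rules out the functions $e^{\pm i\ga z^2}$ but by itself concludes nothing, and the entire difficulty of Hardy's theorem is concentrated in this one step, which you assert rather than prove.

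The step can be repaired, and your vertical-line estimate is indeed the right raw material. In the strict case choose $\al<\ga<1/\be$; your bound then reads $\val{h(\xi+iy)}\le C'e^{\ga\xi^2-(\ga-\al)y^2}$, so along the ray $\arg z=\th$ one has $\val{h(re^{i\th})}\le C'\exp\bigl(r^2(\ga\cos^2\th-(\ga-\al)\sin^2\th)\bigr)$, which is $\le C'$ on the interior ray with $\tan^2\th_0=\ga/(\ga-\al)$. Splitting each quadrant along this neutral ray produces two sectors of opening strictly less than $\pi/2$, where growth of order two is subcritical, and the ordinary Phragm\'en--Lindel\"of principle applies to each; Liouville together with $h(\xi)\to 0$ on $\R$ then gives $g=0$. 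In the borderline case $\ga=\al=1/\be$ there is no neutral ray, and one needs the classical extra device: multiply by $e^{i\ve z^2}$ (sign adapted to the quadrant so that the factor has modulus at most one there), observe that $h(z)e^{i\ve z^2}$ is bounded by $C'$ on the ray $\tan\th_\ve=\ga/(2\ve)$ as well as on the two half-axes, apply Phragm\'en--Lindel\"of on the two resulting subcritical subsectors with constants independent of $\ve$, and let $\ve\to 0$; alternatively, Hardy's original route decomposes $g$ into even and odd parts and substitutes $w=z^2$, halving the order from two to one. Without one of these devices your proposal is incomplete precisely at its only nontrivial point.
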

The classical proof uses complex analysis; a  real variable proof can be found in the paper \cite{CEKKPV}.

\begin{remark}\label{}
  It is well-known \cite{EKPV1}  that Hardy's uncertainty principle is equivalent to the uniqueness for solutions to the Schr\"odinger equation
\begin{eqnarray*}
\partial _tw&=&i\DE w \quad \ \text{ on }\R^{n}\times \R,\\
 w(x,0)&=&f(x),\quad x\in\R^{n},
\end{eqnarray*}
where $ \DE=\sum_j \partial_j^{2} $ denotes the Laplacian on $ \R^{n} $.
 \end{remark}

 Indeed
we have that 
\begin{eqnarray}\label{}
\nn w(x,t)&=&(e^{it\DE }f)(x) =\int_{\R^n}\frac{e^{i\frac{\val{x-y}^2}{4t}}}
{(4\pi i t)^{n/2}}f(y)\,dy
\nn  =\frac
{e^{i\frac{\val{x}^2}{4t}}}
{(4\pi i t)^{n/2}}\int_{\R^n}e^{-i
\frac{x\cdot y}{2t}}
e^{i\frac{\val{y}^2}{4t}}f(y)\,dy.
\end{eqnarray}
Let now $ g(x):=e^{i
\frac{\val{x}^2}{4t}
}f(x), x\in\R .$
Then 
\begin{eqnarray}\label{}
\nn \val {w(x,0)}=\val {g(x)},\quad \vert w(x,t)\vert ={\frac
{1}{(4\pi  \val t)^{n/2}}}\val{\hat g(\frac{x}{2t})},\quad x\in\R^{n}, t\in\R^*.
\end{eqnarray}
Hence if for some $ a,b\in\R^*_+$ and  $T\in\R^* $
\begin{eqnarray}\label{}
\nn \vert w(x,0)\vert\leq Ce^{-\frac{\val x ^{2}}{4a}}, \quad
\val{w(x,T)}&\leq &C e^{-\frac{\val x^{2}}{4b}} ,
\end{eqnarray}
 then 
\begin{eqnarray}\label{}
\nn \val {g(x)}\leq  Ce^{-\frac{\val x ^{2}}{4a}},\quad
 \val {\hat g(\xi)}\leq C_T Ce^{-\frac{\val \xi^{2}}{b/T^{2}}}. 
\end{eqnarray}
Therefore by Hardy's Theorem, if $ ab<T^{2} $, then $ g=0 $, which implies that $ f=0 $ and then also $ w=0 $.

Replace now the Laplacian $\Delta$ on $\R^n$ by the sub-Laplacian $ \LL $ on  the Heisenberg group $ \mathbb H_n$.  In the preprint \cite{ST}, Ben Sa\"{i}d and Thangavelu prove the following uniqueness theorem for  solutions to the associated  Schr\"odinger equation:
 \begin{theorem}\label{ST}
 Let $ a>0, b>0, T\in\R$, such that $ ab<T^{2} $. 
 Let $ (h_t)_{t>0} $ be the heat kernel associated to the sub-Laplacian $ \LL $ on $ \mathbb H_n $. If for a measurable function $ f:\mathbb H_n\to\C $
 \begin{eqnarray}\label{}
\nn \val{f(x)}&\leq&Ch_a(x),\\
\nn \val{(e^{{iT \LL}}f)(x)}&\leq&Ch_b(x),\quad   x\in\mathbb H_n, 
\end{eqnarray}
then $ f=0 $.

 \end{theorem}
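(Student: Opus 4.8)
The plan is to mimic the reduction carried out in the Euclidean Remark above, replacing the Euclidean Fourier transform by the tools of Heisenberg harmonic analysis. First I would take the partial Fourier transform in the central variable: writing points of $\mathbb H_n$ as $(z,s)$ with $z\in\C^n$ and $s\in\R$, and setting $f^\la(z):=\int_\R f(z,s)e^{i\la s}\,ds$, the sub-Laplacian $\LL$ is carried, for each fixed $\la\in\R^*$, into the scaled twisted (special Hermite) Laplacian $\LL_\la$ on $L^2(\C^n)$. The point of this step is that Gaveau's explicit formula for the heat kernel,
\begin{eqnarray*}
h_t(z,s)&=&c_n\int_\R e^{i\la s}\Big(\frac{\la}{\sinh(\la t)}\Big)^n e^{-\frac14\la\coth(\la t)\val z^2}\,d\la,
\end{eqnarray*}
shows that the two hypotheses $\val{f}\le Ch_a$ and $\val{e^{iT\LL}f}\le Ch_b$ become genuine Gaussian bounds once read on the $\la$-fibres.

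Second, I would use that $\LL_\la$ has pure point spectrum $\{(2k+n)\val\la:k\in\N\}$, the corresponding spectral projections being given by twisted convolution against the Laguerre functions $\va_k^\la(z)=L_k^{n-1}\big(\tfrac12\val\la\val z^2\big)e^{-\frac14\val\la\val z^2}$. Both propagators then act diagonally in $k$: the heat semigroup $e^{-t\LL}$ contributes the real Gaussian factor $e^{-t(2k+n)\val\la}$, while the Schr\"odinger propagator $e^{iT\LL}$ contributes the unimodular factor $e^{iT(2k+n)\val\la}$. Thus on each $\la$-fibre the problem decouples into a weighted problem in the Laguerre index $k$, in which the $t=0$ datum is damped by a Gaussian and the $t=T$ datum is merely rotated in phase — exactly the configuration of Hardy's principle.

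Third, I would sum over the angular modes to pass from the special Hermite expansion to a radial Laguerre series in the variable $r=\val z^2$, and use the classical generating-function identity for Laguerre polynomials to convert this series into a Hankel transform. Under this identification the Gaussian bound at $t=0$ and the phase-rotated Gaussian bound at $t=T$ translate, just as $\val{g}$ and $\val{\hat g}$ did in the Euclidean Remark, into Gaussian decay of a single function together with Gaussian decay of its Hankel transform, with the product of the two Gaussian widths governed by $ab/T^2$.

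Finally, I would invoke a Hardy-type uncertainty principle for the Hankel transform in place of Theorem \ref{Hardy}: since $ab<T^2$, the product of the widths is subcritical and the function must vanish. This forces $f^\la=0$ for almost every $\la$, and hence $f=0$. I expect the main obstacle to be precisely this last reduction: establishing the Hankel-transform analogue of Hardy's theorem and, above all, tracking the $\la$-dependent normalizations through the preceding two steps so that the subcriticality condition emerges uniformly in $\la$ in the sharp form $ab<T^2$, rather than with a $\la$-dependent loss that would only yield vanishing on part of the spectrum.
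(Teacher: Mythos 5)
You have reconstructed the route of Ben Sa\"id and Thangavelu in \cite{ST} (from which this theorem is quoted), and steps 1--3 are sound in outline; but the worry you voice at the end is a genuine gap, and the resolution you hope for --- that the subcriticality condition should emerge ``uniformly in $\la$ in the sharp form $ab<T^2$'' --- is provably unattainable. Run your own steps 2--3 at a fixed central frequency $\la\ne 0$: summing the Laguerre series via the generating function turns $e^{iT\LL_\la}$ into twisted convolution with the imaginary Gaussian $c_n\left(\frac{\la}{\sin(\la T)}\right)^{n}e^{\frac{i}{4}\la\cot(\la T)\val z^{2}}$, and factoring the quadratic phase exactly as in the Euclidean Remark writes $e^{iT\LL_\la}f^\la(z)$, up to a unimodular chirp and a constant, as the Euclidean Fourier transform of $g_\la(w):=e^{\frac{i}{4}\la\cot(\la T)\val w^{2}}f^\la(w)$ evaluated at a point of norm $\frac{\val\la}{2\val{\sin(\la T)}}\val z$ (with the standard normalization of the twisted convolution). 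Since $\val{f^\la(z)}\le C\int_\R h_a(z,s)\,ds\le C_a e^{-\val z^{2}/4a}$, inserting the two bounds into Theorem \ref{Hardy} yields the \emph{fibrewise} condition $ab<\left(\frac{\sin(\la T)}{\la}\right)^{2}$, and $\val{\sin(\la T)/\la}$ is strictly less than $\val T$ for every $\la\ne 0$, vanishing at $\la\in\frac{\pi}{T}\Z\setminus\{0\}$. Worse, at those resonant frequencies, where $\val\la\,\val T\in\pi\N$, the propagator has the $k$-independent eigenvalues $e^{iT(2k+n)\val\la}=\pm 1$, i.e.\ $e^{iT\LL_\la}=\pm\Id$, so there the second hypothesis merely repeats the first and carries no information whatsoever. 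Hence no fibrewise argument, sharp or lossy, can give $f^\la=0$ for (almost) every $\la$; what $ab<T^2$ buys is vanishing of $f^\la$ only for $\val\la$ in a neighbourhood of $0$.

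The missing idea that closes this gap is to couple the fibres through analyticity in $\la$: by the heat kernel estimate \eqref{heatest1}, $\val{f(z,s)}\le Ch_a(z,s)\le C' e^{-\de\val s}$ for some $\de>0$, so for a.e.\ $z$ the map $\la\mapsto f^\la(z)$ extends holomorphically to the strip $\val{\Im\la}<\de$, and vanishing for small real $\la$ propagates to all $\la$ by the identity theorem, whence $f=0$. This is precisely the ``Observation'' on which the paper's argument pivots, and the resonance phenomenon above shows it is indispensable, not a technicality. For the record, the paper does not prove Theorem \ref{ST} along your lines at all: it deduces it from Theorem \ref{uniqueness} with $A=\Id$ (estimate \eqref{heatest1} converts the hypotheses into \eqref{woest}--\eqref{Loest} with $a,b$ replaced by $(1+\ve)a,(1+\ve)b$, the strict inequality $ab<T^2$ absorbing the $\ve$), and its proof of Theorem \ref{uniqueness} goes from the explicit imaginary-Gaussian twisted-convolution kernel (Proposition \ref{tildega}) through the identity \eqref{ttfhat} directly to the classical Hardy theorem in the form of Corollary \ref{hrn}. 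In particular, the Hankel-transform Hardy theorem that you single out as the main obstacle is avoidable, as is the angular (bigraded spherical harmonic) decomposition your step 3 would actually require for non-radial $f$: once the Laguerre series is summed into the Gaussian kernel, plain Euclidean Hardy on $\R^{2n}\simeq\C^{n}$ suffices.
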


  \subsection{Step two nilpotent Lie groups}
Let $ \g $ be a real finite dimensional nilpotent non-abelian  Lie algebra of step 2. Then 
\bigskip
\begin{eqnarray}\label{}
 \nn \g&=&\g_1\oplus\g_2,\\
\nn \{0\}\ne [\g,\g]  &=&[\g_1,\g_1]\subset \g_2,\ [\g,\g_2] =\{0\}. 
\end{eqnarray}
By modifying, if necessary, $ \g_1 $ and $ \g_2 $, we may assume that
\begin{eqnarray}\label{}
\nn [\g_1,\g_1] &=&\g_2. 
\end{eqnarray}
By choosing exponential coordinates, we shall realize the corresponding connected, simply connected Lie group $G=\exp \g$  as 
\begin{eqnarray}\label{}
\nn G&=&\g\  (\text{as underlying manifold}), 
\end{eqnarray}
endowed with the  Campbell-Baker-Hausdorff product
$$
\nn x\cdot y:=x+y=\frac{1}{2}[x,y] ,\quad x,y\in G=\g.  
$$
Then $ 0$  is the identity element of  $G $ and $ x\inv=-x, \ x\in G,$
and for $ x_1,y_1\in\g_1,x_2,y_2\in\g_2 $ we have that
$$
 (x_1+y_1)\cdot (y_1+y_2)=(x_1+y_1)+(x_2+y_2+\frac{1}{2}[x_1,y_1] ). 
$$
For every  $ V\in \g $, we obtain a left-invariant vector field $V$ on $G$ by differentiating on the right in the direction of $ V $:
$$
 V f(x):=\frac{d}{dt}f(x\cdot (tV))|_{t=0},\quad f\in C^{\iy}(G), x\in G. 
$$
Furthermore, the group $ G $ admits a one-parameter group of automorphic dilations $\si_t, t\in\R^*,$ given by 
$$
 \si_t(x_1+x_2):= t x_1+t^2 x_2,\quad  x_1\in\g_1,x_2\in \g_2. 
$$

Let $ V_1,\cdots, V_m $ be a basis of $ \g_1 $ and $ U_1,\cdots, U_l $ be a basis of $ \g_2 $. We choose  the scalar product on $ \g $ for which these  vectors form an orthonormal basis, and denote by $|\cdot|$ the corresponding Euclidean norm on $G.$ Notice  that this basis then allows to identify $ \g_1\times \g_2 $ with the Euclidean space $ \R^{m}\times\R^{l}.$

Let $L_A$ be the element of the enveloping algebra $\U(\g)$ of $\g$ given by 
$$
 L_A:=\sum_{j,k=1}^{m}a_{jk}V_jV_k.
$$
Here we assume that the $ m\times m $ matrix $ A=(a_{j k}) $ is a real and symmetric. 
As usually, we shall consider the elements of  $\U(\g),$ such as $L_A,$ as  left-invariant differential operator on the group $G$. 

\begin{example}\label{sublapl}
\rm  If $A=\Id\in M_m(\R),$ then 
$$
L_{\Id}=\LL:=\sum_j V^{2}_j 
$$
is a {\it sub-Laplacian} on $ G,$ and for  suitable functions $ f $ on $ G $, the solution to the associated heat equation
$$
 \partial_t w=\LL w,\quad  t\in \R^*_+,  
$$
is given by 
$$
 w_t=f\ast h_t,  
$$
where  $h_t=e^{t\LL }, t>0,$ is  the family of  heat kernels associated to  $ \LL $.

Recall (see, e.g., \cite{VSC}) that the  {\it Carnot-Carath\'eodory (CC) distance} to the origin associated to the sum of squares operator $\LL$  is defined by
$$
\nn \no g_{cc}:=\inf_\ga |\ga|, \quad g\in G,
$$
where the infimum is taken over all absolutely continuous curves $ \ga: [0,T] \to G $ which are horizontal and connect $ 0\in G $ with $ g $, i.e,  $ \dot\ga(t)=\sum_{j=1}^{m}a_j(t)V_j(\ga(t)) $, for a.e. $t\in  [0,T].$ Here, $ |\ga| $  denotes the length of $\ga$ given by 
$$
\nn |\ga|:=\int_0^{T}(\sum_j a_j(t)^{2})^{{1/2}}dt. 
$$
Notice that the vector fields $ \{V_j\} $ are orthonormal with respect to the underlying sub-Riemanian geometry. 
The distance functions $ \|\cdot\|_{cc} $ is homogeneous of degree one under the family of dilations $\{\si_t\},$  as is the function
$$
 \no{ (x,u)}:=(\val x^{4}+16 \val u^{2})^{1/4}, 
$$
and it is well-known  \cite{VSC} that there exists a constant $ C\geq 1$ such that
\begin{eqnarray}\label{ccnoeq}
 C\inv \no g_{cc}\leq \no g\leq C \no g_{cc}, \quad \mbox{for all} \  g\in G. 
\end{eqnarray}
Moreover, since curves in $ \g_1$  are horizontal,
$\no{(x,0)}_{cc}=\val  x,$   and by relation (\ref{ccnoeq}), $\no{(0,u)}_{cc}\leq C \val u^{1/2},$
so that 
\begin{eqnarray}\label{}
\nn \no{(x,u)}_{cc}^{2}&\leq&(\no{(x,0)}_{cc}+\no{(0,u)}_{cc})^{2}
\leq(\val x+C \val u^{{1/2}})^{2} \\
\nn&\leq&(1+\ve)\val x^{2}+C_\ve \val u, 
\end{eqnarray}
for every $\ve>0.$
Furthermore, since the projection of a horizontal curve to $ \g_1 $ along $ \g_2 $ is horizontal too, it follows that
$$
 \no{(x,u)}_{cc}\geq\no{(x,0)}_{cc}=\val{x}, 
$$
and by (\ref{ccnoeq}) we have 
$
 \no{(x,u)}_{cc}\geq\frac{1}{C}\val{u}^{{1/2}}, 
$
hence
$$
 \no{(x,u)}^{2}_{cc}\geq(1-\ve)\val x^{2}+\frac{1}{C_\ve} \val u .
$$
So finally, for every $ \ve>0 $, there exists $ C_\ve\geq 1 $ such that
\begin{eqnarray}\label{encad}
 (1-\ve)\val x^{2}+\frac{1}{C_\ve} \val u\leq\no{(x,u)}_{cc}^{2}\leq   (1+\ve)\val x^{2}+C_\ve\val u,\quad  u\in \g_2,x\in\g_1.
\end{eqnarray}
It follows from \cite{VSC}, page 50, that we have the following point-wise estimate for the heat kernel $ h_t $ on  
$G$ (this is a special case of a more general result holding true for arbitrary nilpotent Lie groups):
$$
 h_t(g)\leq C_\ve t^{{-D/2}}e^{-\frac{\no g_{cc}^{2}}{4(1+\ve)t}},\quad  g\in G, t>0, \ve>0,  
$$
where $ D:=m+2l=\dim{\g_1}+2\dim{\g_2}$ denotes the homogeneous dimension of  $ G=\g_1\oplus\g_2.$ Consequently
\begin{eqnarray}\label{heatest1}
 h_t(x,u)\leq C_\ve t^{{-D/2}}e^{-\frac{\val x^{2}}{4(1+\ve)t}}e^{-\frac{\val u}{C_\ve t}},\quad  x\in\g_1,u\in \g_2, t>0, \ve>0.  
\end{eqnarray}
This estimate is essentially optimal, i.e., it is also known  \cite{VSC} that 
$$
h_t(g)\geq C_\ve t^{{-D/2}}e^{-\frac{\no g_{cc}^{2}}{4(1-\ve)t}}, \quad g\in G, t>0, 1>\ve>0,  
$$
 and so 
\begin{eqnarray}\label{heatest}
 h_t(x,u)\geq C_\ve t^{{-D/2}}e^{-\frac{\val x^{2}}{4(1-\ve)t}}e^{-C_\ve \frac{\val u}{t}},\quad  x\in\g_1,u\in \g_2, t>0, 0<\ve<1.  
\end{eqnarray}
 \end{example}
 
 \bigskip
 Let us now return to our operator 
 $$ L_A= \sum_{j,k=1}^{m}a_{jk}V_jV_k
$$
on $G,$ 
and consider  the initial value problem for the  associated Schr\"odinger equation
\begin{eqnarray}\label{schroL}
 \left\{\begin{array}{cc}
 \partial_t w(t,g)=i L_Aw(t,g),& t\in \R,g\in G, \\
 w(0,g)=f(g)\, , \ & 
\end{array}
\right .
\end{eqnarray}
with  $f\in L^{2}(G).$
If we assume that $ w\in C^1(\R, L^{2}(G)) $, then
\begin{eqnarray}\label{weitL}
\nn w(t,\cdot)&=&e^{it L_A}f, \quad t\in\R, 
\end{eqnarray}

In combination with the work by Ben-Sa\"id-Thangevalu, estimates (\ref{heatest1}) and  (\ref{heatest}) motivate our main 
\begin{theorem}\label{uniqueness}
Assume that there are constants $ C>0, \de>0, a>0,b>0, $ so that the solution $ w $ to the system (\ref{schroL}) for the operator $ L_A $  satisfies for some $ T\ne 0 $
\begin{eqnarray}\label{}
\label{woest} \val{w(0,(x,u))}&\leq& C e^{{-\frac{\val x^{2}}{4a}}}e^{-\de \val u}\\
\label{Loest}\val{w(T,(x,u))}&\leq&Ce^{{-\frac{\val x^{2}}{4b}}}e^{-\de \val u} , \quad x\in\g_1,u\in\g_2.
\end{eqnarray}
Then $ w\equiv 0 $ on $ \R\times G $ whenever 
\begin{equation}\label{avT}
ab < \noop A^{2}T^{2}.
\end{equation}
Here $ \noop A $ denotes the operator norm of the matrix $ A $ when acting on $ \R^{m}\simeq\g_1,$ with respect to the Euclidian norm $|\cdot|.$ 
 \end{theorem}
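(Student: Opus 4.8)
The plan is to reduce the equation, by a partial Fourier transform in the central variable $u\in\g_2$ only, to a $\la$--parametrized family of Schr\"odinger equations on $\g_1\cong\R^m$ driven by \emph{quadratic} (magnetic harmonic--oscillator) Hamiltonians, to solve each of these explicitly inside Howe's oscillator semigroup, and then to land directly on Hardy's Theorem \ref{Hardy}. Concretely, set
\[
 w^{\la}(t,x):=\int_{\g_2}w(t,x,u)\,e^{-i\sp{\la}{u}}\,du,\qquad \la\in\g_2^{*}\cong\R^{l},\ x\in\g_1 .
\]
Using the explicit form $ V_j=\partial_{x_j}+\tfrac12\sum_{i,l}x_i c_{ij}^{\,l}\partial_{u_l} $ of the left--invariant fields, where $ [V_i,V_j]=\sum_l c_{ij}^{\,l}U_l $, the transform turns $ V_j $ into a twisted derivative $ V_j^{\la}=\partial_{x_j}+\tfrac{i}{2}(J_\la x)_j $, with $ J_\la $ the skew--symmetric matrix of the alternating form $ (X,Y)\mapsto\sp{\la}{[X,Y]} $ on $ \g_1 $. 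One then computes (the trace term $ \tr(AJ_\la) $ vanishing since $A$ is symmetric and $J_\la$ skew) that $ L_A^{\la}:=\sum_{jk}a_{jk}V_j^{\la}V_k^{\la} $ is the quadratic operator
\[
 L_A^{\la}=\sp{A\nabla}{\nabla}+i\sp{AJ_\la x}{\nabla}-\tfrac14\sp{AJ_\la x}{J_\la x},
\]
and (\ref{schroL}) becomes $ \partial_t w^{\la}=iL_A^{\la}w^{\la} $, so that $ w^{\la}(T,\cdot)=e^{iTL_A^{\la}}w^{\la}(0,\cdot) $. The hypothesis $ e^{-\de\val u} $ is used twice here: it guarantees that $ w^{\la} $ is well defined and that $ \la\mapsto w^{\la} $ extends holomorphically to the tube $ \val{\Im\la}<\de $; and integrating (\ref{woest}), (\ref{Loest}) in $u$ gives, for each such $ \la $, the one--variable estimates $ \val{w^{\la}(0,x)}\leq C_\la\, e^{-\val x^{2}/4a} $ and $ \val{w^{\la}(T,x)}\leq C_\la\, e^{-\val x^{2}/4b} $ on $ \g_1 $.

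Next I would exploit that, because $ L_A^{\la} $ is quadratic in the Weyl algebra, the propagators $ \{e^{itL_A^{\la}}\} $ form a one--parameter subgroup of the metaplectic representation, i.e.\ they lie in Howe's oscillator semigroup. The techniques of \cite{MRAnn} and \cite{MRMaAn} then yield the Schwartz kernel of $ e^{iTL_A^{\la}} $ explicitly as a Gaussian (a generalized Mehler kernel) governed by the classical Hamiltonian flow of the symbol of $ L_A^{\la} $. Conjugating away the unimodular quadratic--phase factors this kernel produces -- exactly as in the Euclidean computation carried out in the Introduction for $ \De $ -- rewrites the relation $ w^{\la}(T,\cdot)=e^{iTL_A^{\la}}w^{\la}(0,\cdot) $ in the form
\[
 w^{\la}(T,x)=p^{\la}(T)\,e^{i\psi^{\la}(x)}\,\widehat{g^{\la}}(S^{\la}x),\qquad g^{\la}(y)=e^{i\va^{\la}(y)}w^{\la}(0,y),
\]
with $ \va^{\la},\psi^{\la} $ real quadratic, $ S^{\la} $ the (invertible) linear flow map, and $ p^{\la}(T) $ a constant. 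Thus $ \val{g^{\la}}=\val{w^{\la}(0,\cdot)} $, and the two Gaussian bounds above become Gaussian bounds on $ g^{\la} $ and on $ \widehat{g^{\la}} $.

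Finally I would apply Hardy's Theorem \ref{Hardy}. Diagonalising the symmetric matrix $A$ and testing the estimates in the eigendirection realising the largest eigenvalue $ \pm\noop A $, the effective propagation ``time'' in that direction is $ T\noop A $; the product of the two Gaussian widths there is $ ab/(\noop A^{2}T^{2}) $, so the hypothesis $ ab<\noop A^{2}T^{2} $ forces $ \al\be<1 $ in that single direction and Hardy gives $ g^{\la}=0 $, i.e.\ $ w^{\la}(0,\cdot)=0 $. Since this holds for every $ \la $, Fourier inversion in $u$ yields $ f=0 $, whence $ w\equiv 0 $ on $ \R\times G $. The single inequality $ ab<\noop A^{2}T^{2} $ suffices because one favourable direction is enough to trigger Hardy, and the critical (worst) case is the free, $ \la $--independent part $ \sp{A\nabla}{\nabla} $, the oscillator terms present for $ \la\neq0 $ only improving the spreading.

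I expect the main obstacle to be the explicit oscillator--semigroup computation of the kernel of $ e^{iTL_A^{\la}} $, together with the bookkeeping of its Gaussian factors, precisely because $A$ is only assumed symmetric: when $A$ is indefinite or degenerate the operator $ L_A $ need not be elliptic and $ L_A^{\la} $ need not be self--adjoint, so the corresponding semigroup element may sit on the boundary of, or require analytic continuation within, the oscillator semigroup, and the kernel may degenerate (caustics) for exceptional $ \la $. Controlling these degeneracies -- using the holomorphy of $ \la\mapsto w^{\la} $ on the tube $ \val{\Im\la}<\de $ to pass through the bad locus by continuity -- and extracting from the flow the exact threshold $ \noop A $ \emph{uniformly} in $ \la $ is the delicate part of the argument.
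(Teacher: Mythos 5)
Your skeleton---partial Fourier transform in the central variable, explicit Mehler-type Gaussians from the oscillator semigroup, then Hardy tested in the top eigendirection of $A$ (this last step is exactly the paper's Corollary \ref{hrn})---matches the paper's strategy for one sub-case, but two of your load-bearing claims are wrong and a whole case is missing. The central error is the assertion that Hardy can be run for \emph{every} $\la$ because ``the critical case is the free part and the oscillator terms only improve the spreading.'' This is false. The time-$T$ bound converts (as in \eqref{ttfhat}) into a Gaussian bound on $\widehat{f^\mu_t}$ at the argument $-\pi J_\mu(\coth(tS_\mu/2)-\Id)x$, and the effective scaling matrix $(J_\mu(\coth(tS_\mu/2)-\Id))^{-1}$ equals $-\frac t2A+O(\val\mu)$ \emph{only in the limit} $\mu\to 0$ (this is \eqref{estforA}). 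For $\la$ away from zero the $\coth$- (in elliptic directions, $\cot$-) factors make this matrix genuinely $\la$-dependent and in general smaller: already on $\mathbb H_n$ with $A=\Id$ its norm is of order $\val{\sin(c\la T)}/\val{\la}$, which tends to $0$ for large $\val\la$ and vanishes at the recurrence values $c\la T\in\pi\Z$, where the twisted propagator is essentially a unimodular multiple of the identity and no Gaussian decay of $\widehat{g^\la}$ whatsoever can be extracted. So nothing gives you the threshold $T\noop A$ uniformly in $\la$, and your final step ``since this holds for every $\la$, Fourier inversion gives $f=0$'' cannot be executed. The paper instead applies Hardy \emph{only} for generic $\mu$ in a small conic neighbourhood of the origin, and the decay $e^{-\de\val u}$ is used to make $\mu\mapsto f^\mu(x)$ holomorphic on the tube $\val{\Im\mu}<\de/2\pi$, so that the identity principle upgrades ``$f^\mu=0$ for small generic $\mu$'' to $f\equiv 0$; this holomorphy is the main engine of the reduction, not a device for ``passing through the bad locus by continuity,'' which in your version would have to cross the unboundedly many $\la$ where the quantitative hypothesis of Hardy is simply not satisfied.

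Second, the two degenerate situations that occupy most of the paper's proof are absent from your outline. (i) If $A$ is singular (allowed, since $A$ is merely symmetric and nonzero), then for \emph{all} small $\mu$---not just an exceptional locus---the kernel of the propagator contains Dirac factors along $\ker A$, your flow map $S^\la$ is not invertible, and the representation $w^\la(T,x)=p^\la\,e^{i\psi^\la}\,\widehat{g^\la}(S^\la x)$ does not exist. The paper's fix is a concrete mechanism you would need to supply: compose with the contraction $\tilde P^{\mu}_{t\ve'(1+i)}$ from Howe's semigroup, use Baker--Campbell--Hausdorff in $\s\p(n,\R)$ (Remark \ref{lierel}) to rewrite the product as the propagator of a perturbed \emph{nondegenerate} matrix $A-\ve'\Id+O(\ve'\val\mu)$, and remove the regularization by an $L^2$-limit as $\ve'\to 0$. (ii) If $\g$ is not MW, then $\om_\la$ is degenerate for \emph{every} $\la$, so there is no favourable $\la$ at all and no analytic continuation can rescue the argument; here the paper embeds $G$ into the MW group $H=(\g_1\times\g_1^*)\oplus(\g_2\times\R)$, lifts $f$ by $\tilde f((\xi;s)\cdot(x,u))=e^{-\val\xi^{2}/4a}e^{-\de\val s}f(x,u)$, verifies that the hypotheses survive with $a'=a/(1-\de a)$ and the analogous $b'$ (splitting into $a\le b$ and $b<a$, the latter handled by time reversal $T\mapsto -T$), and applies the MW result on $H$. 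Without the localization at $\mu=0$ plus identity principle, the singular-$A$ perturbation inside the oscillator semigroup, and the non-MW lifting, your proposal covers only the sub-case of Subsection \ref{generic} ($\g$ MW and $A$ invertible), and even there only after correcting the uniform-in-$\la$ claim.
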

Notice that (\ref{encad}) shows that  the conditions (\ref{woest}) and (\ref{Loest}) in the theorem above are weaker than the corresponding conditions in Theorem \ref{ST}. 

\medskip

As usually, in the sequel we shall denote by $C$ a constant whose value may change from line to line.

\section{Proof of Theorem \ref{uniqueness}}

\setcounter{equation}{0}
\subsection{Preliminaries}

In order to prepare the proof  of Theorem \ref{uniqueness},  we first derive   the  following variant of Hardy's Theorem: 
\begin{corollary}\label{hrn} 
Let $ A=(a_{jk})\in M_n(\R) $ be a non-trivial real valued symmetric $ n\times n $ matrix. Moreover, 
let  $ F:\R^{n}\to \C $ be a measurable  function such that
\begin{eqnarray}\label{}
\nn \val {F(x)}&\leq& Ce^{
-\frac{\val {x^2}}{4a}
}
, \quad x\in\R^{n},\\
\nn \val{\widehat F(y)}&\leq& C
e^{
-\frac{
\val { A (y)}^{2}
}
{b}} , \quad y\in\R^{n},
\end{eqnarray}
for some positive constants $ a>0, b>0$. If $  ab<\noop A^{2}  $, then $ F=0 $.
\end{corollary}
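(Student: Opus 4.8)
The plan is to diagonalize $A$ so as to convert the anisotropic frequency decay into decay along a single coordinate, and then to apply the one-dimensional case of Hardy's Theorem \ref{Hardy} slice by slice. First I would invoke the spectral theorem to write $A=\trans O D O$ with $O\in O(n)$ and $D=\mathrm{diag}(\la_1,\dots,\la_n)$, the labelling chosen so that $\val{\la_1}=\noop A$. Setting $G(x):=F(\trans O x)$, the substitution $x=Oz$ gives $\widehat G(y)=\widehat F(\trans O y)$. Since Gaussians are rotation invariant and $A\,\trans O=\trans O D$ (so that $\val{A\,\trans O y}=\val{Dy}$), the hypotheses on $F$ transform into $\val{G(x)}\le Ce^{-\val x^{2}/(4a)}$ and, discarding the favourable decay in the remaining frequency variables,
\begin{eqnarray*}
\val{\widehat G(y)}\le Ce^{-\val{Dy}^{2}/b}\le Ce^{-\la_1^{2}y_1^{2}/b}.
\end{eqnarray*}
It therefore suffices to prove that $G=0$.

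The crucial point is that merely integrating $G$ over the last $n-1$ variables would only yield the vanishing of a single projection, which is too weak; instead I would carry a free frequency parameter. Writing $x=(x_1,x')$ with $x'\in\R^{n-1}$, for each fixed $\eta'\in\R^{n-1}$ I introduce the one-variable function
\begin{eqnarray*}
h_{\eta'}(s):=\int_{\R^{n-1}}G(s,x')\,e^{-i\eta'\cdot x'}\,dx',\quad s\in\R,
\end{eqnarray*}
that is, the partial Fourier transform of $G$ in the variable $x'$. A direct computation with Fubini shows that its one-dimensional Fourier transform satisfies $\widehat{h_{\eta'}}(\ta)=\widehat G(\ta,\eta')$ for all $\ta\in\R$.

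Next I would verify the two Gaussian bounds, uniformly in $\eta'$. Majorizing $\val{G(s,x')}$ by its Gaussian bound and integrating out the $\R^{n-1}$ variable gives $\val{h_{\eta'}(s)}\le Ce^{-s^{2}/(4a)}$, while the identity above together with the transformed frequency estimate gives $\val{\widehat{h_{\eta'}}(\ta)}\le Ce^{-\la_1^{2}\ta^{2}/b}$. Thus $h_{\eta'}$ satisfies the hypotheses of the one-dimensional Hardy Theorem \ref{Hardy} with $\al=a$ and $\be=b/\la_1^{2}$, and the assumption $ab<\noop A^{2}=\la_1^{2}$ is precisely $\al\be<1$; hence $h_{\eta'}\equiv 0$ for every $\eta'\in\R^{n-1}$. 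Since, for each fixed $s$, the map $\eta'\mapsto h_{\eta'}(s)$ is the $(n-1)$-dimensional Fourier transform of the integrable function $x'\mapsto G(s,x')$, its vanishing for all $\eta'$ forces $G(s,\cdot)=0$ almost everywhere by injectivity of the Fourier transform. Therefore $G=0$, and consequently $F=0$.

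The only genuinely delicate point in this argument is the passage from decay in a \emph{single} frequency direction to the vanishing of the whole function: the naive integration loses information, and it is precisely the use of the partial Fourier transform with the free parameter $\eta'$, combined with the injectivity of the $(n-1)$-dimensional Fourier transform, that lets one reconstruct $G$ completely from a purely one-dimensional application of Hardy's theorem. Everything else — the orthogonal reduction and the two routine Gaussian estimates — is elementary.
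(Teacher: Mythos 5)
Your proof is correct and takes essentially the same approach as the paper: a partial Fourier transform in the complementary variables carrying a free frequency parameter, a family of applications of Hardy's theorem with $\al=a$, $\be=b/\noop A^{2}$, and injectivity of the Fourier transform in the remaining variables to conclude $F=0$. The only cosmetic difference is that you first diagonalize $A$ orthogonally and invoke the one-dimensional Hardy theorem along a single top-eigenvalue coordinate, whereas the paper works directly on the (possibly higher-dimensional) eigenspace $E_\la$ of a largest-modulus eigenvalue, using the orthogonal splitting $\vert A(\xi+\nu)\vert^{2}=\vert A\xi\vert^{2}+\vert A\nu\vert^{2}$ instead of your change of variables.
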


\begin{proof}
Let $ \la $ be an eigenvalue of $ A $ of largest modulus. Then $ \noop A=\val\la $. Write $ \R^{n}=E_\la\oplus V $,
where $ E_\la $ denotes the eigenspace of $ \la $ and $ V $ the orthogonal complement of $ E_\la $ in $ \R^{n}. $
If $x\in E_\la,  v\in V $,  let us write $ F_x(v):=F(x+v) $. The function $ F_x $ is contained in $ L^{1}(V)$ for a.e.  $ x,$ and  if we put  $ F^{\nu}(x):=\int_VF(x+v)e^{-i \nu\cdot v}\,dv,\,  x\in E_\la,\nu\in V,$ then 
\begin{eqnarray}\label{}
\nn F^{\nu}(x)=\widehat{F_x}(\nu),\quad \nu\in V, x\in E_\la, 
\end{eqnarray}
so that 
\begin{eqnarray}\label{}
\nn \val{F^\nu(x) }&\leq& C \int_V e^{-\frac{ \val {x+v}^{2}}{4a}}dv 
= (C\int_V e^{-\frac{ \val {v}^{2}}{4a}}dv) \,e^{-\frac{ \val x^{2}}{4a}}
C_{a }\, e^{-\frac{ \val x^{2}}{4a}}, \quad x\in E_\la.
\end{eqnarray}
Furthermore we have that for $\xi\in E_\la$
\begin{eqnarray}\label{}
\nn \vert \widehat {F^\nu}(\xi)\vert&=&\vert \int_{E_\la}F^\nu(x)e^{-i \xi\cdot x}dx\vert
=\vert \int_{E_\la}\int_V F(x+v)\, e^{-i \nu\cdot v}\,dv\, e^{-i \xi\cdot x}\,dx\vert\\
\nn  &=&\vert \widehat F(\xi+\nu)\vert\le C e^{-\frac{\vert A(\xi+\nu)\vert^{2}}{b}}
=C e^{-\frac{\vert A(\xi)\vert^{2}+\vert A(\nu)\vert^{2}}{b}}
= C e^{-\frac{\vert A(\nu)\vert^{2}}{b}}  e^{-\frac{\vert A(\xi)\vert^{2}}{b}}\\
\nn  &=&C_{\nu,b} e^{-\frac{\val\la^{2} \val \xi^{2}}{b}}=C_{\nu,b} e^{-\frac{\noop A^{2} |\xi|^{2}}{b}}.
\end{eqnarray}
Applying  Hardy's  classical theorem \ref{Hardy} to the function $ F^v $, we see that $ F^v=0 $ for every $ v\in V $, since  $ a(b/\noop A^{2})  <1 $.
Therefore the function $V\ni  w\mapsto F(x+w) $ vanishes  for every $ x\in E_\la,$ and finally $ F=0 $. 
 \end{proof}
 
 Coming back to our group $G,$ let us  put as before $ L_A $ and 
\begin{eqnarray}\label{Ttdefined}
\nn T_t&:=&e^{it L_A},\quad  t\in \R. 
\end{eqnarray}
Then $ T_t $ is a unitary operator on $ L^{2}(G)$ for every  $t\in \R.$

\begin{definition}
\rm  
Following \cite{MRAnn},  define for $ \mu\in \g_2^* $ the skew symmetric form $ \om_\mu$ on $ \g_1 $ by 
\begin{eqnarray}\label{ommudef}
\om_\mu(V,W)&=&\mu([ V,W] ),
\end{eqnarray}
and the corresponding matrix
\begin{eqnarray}\label{}
\nn (J_\mu)_{jk}&:=&\mu([V_j,V_k] ) .
\end{eqnarray}
Given the real symmetric matrix $A\in M_m(\R),$ we also put
\begin{eqnarray}\label{smu}
S_\mu:=-AJ_\mu \in M_m(\R). 
\end{eqnarray}
 \end{definition}

Two possible scenarios may arrive (\cite{MRAnn}) :
\begin{itemize}
\item $ \om_\mu $ is non-degenerate for generic  $ \mu\in\g^*_2 $, (i.e., on a Zariski open subset). 
\smallskip

\noi In this case,  necessarily $ m=2n $ is even, and we  call $\g$  a {\it MW-algebra (``Moore-Wolf'')}.
\item  $ \om_\mu $ is degenerate for every $ \mu\in\g_2^* $, i.e., $ \g $ is not $ MW $.
 \end{itemize}
 
 \subsection{The case when $ \g $ is MW}

If $ \mu\in\g_2^* $ is such that $ \om_\mu $ is non-degenerate, then
\begin{eqnarray}\label{}
\nn (\om_\mu^{\wedge n})&=&Pf(\om_\mu)dx, 
\end{eqnarray}
where $ dx=dx_1\wedge\cdots\wedge dx_m  $ and $ Pf(\om_\mu) $ denotes the Pfaffian of $ \om_\mu $. In fact 
\begin{eqnarray}\label{}
\nn Pf(\om_\mu)&=&\pm \sqrt{\det J_\mu}. 
\end{eqnarray}
We put 
\begin{eqnarray}\label{detjmuedx}
 d_\mu x&:=&\val{Pf(\om_\mu)}dx=\sqrt{\det J_\mu}dx, 
\end{eqnarray}
regarded as a positive measure on $ \g_1 $.

The {\it $\mu $-twisted convolution} of two suitable functions or distributions on $ \g_1 $ is defined as
\begin{eqnarray*}
\nn \phi\times_\mu \psi(x)&:=&\int_{\g_1}\phi(x-y)\psi(y)e^{-i\pi \om_\mu(x,y)}d_\mu y\\
\nn &=&\int_{\g_1}\phi(y)\psi(x-y)e^{i\pi \om_\mu(x,y)}d_\mu y.
\end{eqnarray*}
Given $ f\in L^{1}(G) $ we put
\begin{eqnarray}\label{}
\nn f^{\mu}(x)&:=&\frac{1}{\val{Pf(\om_\mu)}}\int_{g_2}f(x,u)e^{-2\pi i \mu(u)}du.
\end{eqnarray}
Then for $ f, g\in L^{1}(G) $,
\begin{eqnarray}\label{twistcon}
(f\ast g)^{\mu}&=&f^\mu\times_\mu g^{\mu},
\end{eqnarray}
where $ \ast $ denotes convolution on the group $ G $. 
Fourier-inversion on $ \g_2\simeq\R^{l} $ yields
\begin{eqnarray}\label{fouinv}
f(x,u)&=&\int_{\g_2^*}f^{\mu}(x)e^{2\pi i \mu(x)}\val{Pf(\om_\mu)}d\mu. 
\end{eqnarray}

If $D\in\U(\g),$ we define the differential operator $D^\mu$ on $\g_1$ by the relation
$$
(Df)^\mu=D^\mu f^\mu, \qquad f\in\SS(G),
$$
where $\SS(G)$  denotes the space of Schwartz functions on $G.$  In particular, if $ X\in\g_1 $, then
\begin{eqnarray*}
 \nn( Xf)^{\mu}(x) &= &\int_{\g_2}\frac{d}{dt}f(x\cdot (tX)+u)|_{t=0}\, e^{-2\pi i \mu(u)} du=
 \partial_X f^{\mu}(x)+i\pi \om_\mu(x,X)f^{\mu}(x),
\end{eqnarray*}
so that
\begin{equation}\label{xmuexp}
X^\mu=\partial_X+i\pi \om_\mu(x,X).
\end{equation}
In a similar way, if  $ U\in\g_2 $, then
$$
U^\mu= 2i\pi \mu(U).
$$

 
 In particular, wee see that 
 \begin{equation}\label{lmudef}
(L_A f)^\mu=L_A^{\mu}f^{\mu}=\sum_{j,k}a_{jk} V^{\mu}_jV^{\mu}_k (f^{\mu}), \quad f\in \SS(G),
\end{equation}
where the $V_j^\mu$ are given by \eqref{xmuexp},
and we find that 
\begin{eqnarray}\label{}
\nn (T_tf)^{\mu}&=&e^{it L_A^{\mu}}(f^{\mu}),
\end{eqnarray}
Let us put
\begin{eqnarray}\label{ttmu}
 T_t^{\mu }\phi&:=&e^{i\frac{t}{4\pi}L_A^{\mu}}\phi, 
\end{eqnarray}
so that 
\begin{eqnarray}\label{ttmudef}
\nn (T_t f)^{\mu}&=&T^{\mu}_{4\pi t}(f^{\mu}). 
\end{eqnarray}
Since $ \om_\mu $ is non-degenerate, we can choose a symplectic basis $\V_\mu'=\V'=\{ V'_1,\cdots, V'_m\} $ of $ \g_1 $. 

Let $ R(\mu) $ be the transition matrix, i.e.,  $ V_j'=\sum_i R_{i,j}(\mu)V_i.  $ 

We can choose these bases $ \V_\mu' $ in an analytic way on an 
open  dense subset $ \U $ of the unit sphere in   $ \g^*_2 $ and then define for  $ \mu $ in the open  cone $\CC:= \R^*_+ \U $  of generic $\mu\ne 0$ the matrix $ R(\mu) $ by

\begin{eqnarray}\label{choose rmu}
 R(\mu)&=&{\vert\mu\vert ^{-1/2}}{R\Big(\frac{\mu}{\vert\mu\vert}\Big)}.
\end{eqnarray}
With respect to this basis,  we have that $ \om_\mu(V_j',V_k')=J_{jk} $, where
\begin{eqnarray*}
\nn J=(J_{ij})&=&
 \left(\begin{array}{cc}
0_n & I_n\\
-I_n & 0_n\\
\end{array}
 \right).
\end{eqnarray*}
In the new, symplectic coordinates  coordinates 
\begin{equation}\label{sympcoo}
 z=R(\mu)\inv x 
\end{equation} 
for $\g_1,$  our operator $ L_A^{\mu} $ is given by 
\begin{eqnarray}\label{newop}
\nn \tilde L_A^{\mu}&=&\sum_{j,k}a_{j k}(\mu)\tilde V'_j \tilde V'_k, 
\end{eqnarray}
where $ A(\mu)=R(\mu)\inv A \trans R(\mu)\inv $ and 
\begin{eqnarray}\label{expl}
\nn \tilde V'_j f(z)&=&\frac{\partial f}{\partial z_j}f(z)-i\pi (Jz)_j f(z).
\end{eqnarray}
Then the $ \tilde V'_j  $'s  are exactly the operators which arise from the usual vector fields on the Heisenberg group $ \mathbb H_n$ after applying the partial  Fourier transform in the central direction at $ \mu=1 $.

Since $ \om_\mu $ is represented by the matrix $ J $ in the new basis, we have that 
\begin{eqnarray}\label{}
\nn J&=&^{t}\! R(\mu)J_\mu  R(\mu) , 
\end{eqnarray}
so that by (\ref{detjmuedx})
\begin{eqnarray}\label{dzdmu}
 (\det J_\mu)\det (R(\mu)^{2}=1 \ \text{ and }d_\mu x =dz.
\end{eqnarray}

We can make use of   the symplectic coordinates $z$  in order to refer to  results from \cite{MRIn}. In particular, $ (T^{\mu}_t)_{t\in\R} $ is a well-defined one parameter group of unitary operators on $ L^{2}(\g_1,d_\mu) $. If $ \tilde f $ represents $ f $ in the symplectic coordinates, i.e.,
$$ \tilde f(z)=f(R(\mu)z),
$$ 
and if $ \tilde T^{\mu}_t $ represents $ T^{\mu}_t $ in these coordinates, then 
\begin{eqnarray}\label{exptilde}
 \tilde T^{\mu}_t \tilde f&=&\tilde f\times \tilde \ga_t^{\mu},  
\end{eqnarray}
where
\begin{eqnarray}\label{}
\nn \tilde f\times \tilde g(z)&=&\int_{\R^{m}}\tilde f(z-w)\tilde g(w)e^{-i \pi ^{t\! }w J z}\,dw , 
\end{eqnarray}
and  where the explicit formulas for $ \tilde\ga^{\mu}_t $ can be derived from \cite{MRIn}.
These formulas depend on the spectrum of the matrix 
\begin{eqnarray}\label{s(mu)}
S(\mu):=-R(\mu)\inv A J_\mu R(\mu)=-A(\mu)J, 
\end{eqnarray}
which is conjugate to the matrix
$$
\nn S_\mu:=-A J_\mu. 
$$
Actually $ S_\mu\in\s\p(\g_1,\om_\mu)  $, i.e., 
\begin{eqnarray}\label{}
\nn \om_\mu(V,S_\mu W)&=&-\om_\mu 
(S_\mu V,W), \quad V,W\in\g_1
\end{eqnarray}
(c.f. \cite{MRAnn} ).
\begin{remark}\label{lierel}
  The formula $ S=-AJ $ establishes a bijective relation between the elements $ S\in\s\p(n,\R) $ and the space of the real symmetric $ n\times n $ matrices $ A=(a_{jk}) $. If we define correspondingly 
\begin{eqnarray}\label{}
\nn \tilde \DE_S&:=&\sum_{j,k}a_{jk}\tilde V'_j\tilde V'_k,  
\end{eqnarray}
then 
\begin{eqnarray}\label{brack}
 [\frac{i}{4\pi}\tilde \DE_{S_1},\frac{i}{4\pi}\tilde \DE_{S_2}] &=&\frac{i}{4\pi}\tilde \DE_{[S_1,S_2]} .
\end{eqnarray}
This suggests that the operators $ \pm e^{{\frac{i}{4{\pi}}}\tilde\DE_S} $,  $ S\in \s\p(n,\R) $, generate a Lie group $ \tilde M(n,\R) $ with Lie algebra $ \s\p(n,\R) $. Indeed, this group is isomorphic to the metaplectic group, a two-fold cover of the symplectic group $Sp(n,\R),$ as has been shown by R. Howe in \cite{Howe}, where he realizes the metaplectic group as a boundary of the the called ``oscillator semi-group''. 
\smallskip

In particular, $ (\tilde T^{\mu}_t)_{t\in\R} $ is the one-parameter subgroup of $ \tilde M(n,\R) $ generated by $ \tilde \DE_{S(\mu)} $.
\end{remark}
 
In order to indicate the dependency of $ \tilde T^{\mu}_t $ on the matrix $ S(\mu)\in \s\p(n,\R) $, let us write 
\begin{eqnarray}\label{tildetsddef}
\tilde T_{t,S}&:=&e^{\frac{it}{4\pi}\tilde \DE_S}, \quad S\in \s\p(n,\R), t\in\R,  
\end{eqnarray}
so that
\begin{eqnarray}\label{}
\nn \tilde T^{\mu}_{4\pi T}&=&\tilde T_{4\pi T,S(\mu)}. 
\end{eqnarray}

   \bigskip
 Let us return to the proof of Theorem \ref{uniqueness}. 
We denote in the sequel the function $ w(0,\cdot ) $ of the theorem by $ f $. 
Following \cite{ST}, we begin with  the following simple 
\medskip

\noi {\bf Observation:} {\it It will suffice to show that (\ref{woest}) and (\ref{Loest}) imply that $ f^{\mu}=0 $ for all $\mu$ in the cone $\CC$ of generic $\mu$ which are  contained in a sufficiently small  neighbourhood of the origin  in $ \g_2^* \setminus \{0\}$. }

\smallskip 
Indeed,  condition (\ref{woest}) tells us that for a.e. 
 $x,$  the function $ \mu\mapsto f^{\mu}(x) $ is well defined and holomorphic for $ \mu\in (\g_2^*)_\C\simeq \C^{l},   \val{\Im\mu} <{\de}/{2\pi}.$ Hence by the identity principle for holomorphic functions,  our assumption implies that  $ f^{\mu}(x)=0 $ for all $ \mu\in \g_2^*,$ and therefore $ f=0 $. 
\medskip

Let us therefore assume in the sequel that $\mu\in\CC\subset \g_2^*.$
Then, by  (\ref{woest}) and (\ref{Loest}), 
\begin{eqnarray}\label{}
\label{woen} \val{f^{\mu}(x)}&\leq&C e^{-\frac{\val x^{2}}{4a}}\\
\label{Loen}  \val{T^{\mu}_{4\pi T}(f^{\mu})(x)}&\leq &C e^{-\frac{\val x^{2}}{4b}}, \quad x\in\g_1.
\end{eqnarray}
In the $ z $-coordinates, these estimates are equivalent to 
\begin{eqnarray}\label{}
\label{woenz} \val{\tilde f^{\mu}(z)}&\leq&C e^{-\frac{\val{R(\mu)z}^{2}}{4a}}\\
\label{Loenz}  \val{\tilde T^{\mu}_{4\pi T}(\tilde f^{\mu})(z)}&\leq &C e^{-\frac{\val {R(\mu)z}^{2}}{4b}},\quad  z\in\R^{m}.
\end{eqnarray}

As mentioned before, (c.f. \eqref{exptilde}), the operators $ \tilde T_{t,S} $ are twisted convolution operators of the form
\begin{eqnarray}\label{}
\nn \tilde T_{t,S}\va&=&\va\times \tilde \ga_{t,S},
\end{eqnarray}
where the $ \tilde \ga_{t,S} $ have been computed explicitly in \cite{MRIn}. Generically, the $ \tilde \ga_{t, S} $ are purely  imaginary Gaussians. The expression for  $ \tilde \ga_{t, S} $ becomes particularly simple for small times $ t $, when $ S $ is regular. If $ S $ is degenerate, $ \tilde \ga_{t, S} $ will rather look like a purely imaginary Gaussian in one set of variables and like a Dirac measure in another set of variables. We shall avoid the latter case by first assuming that the matrix $A$ is non-degenerate, and subsequently reduce  the case when $ A $ is degenerate to the non-degenerate case by means of a  small perturbation  trick.

\subsubsection{The case when $ \g $ is MW and $ A $ is non-degenerate}\label{generic}

When $ A $, hence also $ S_\mu $ and $ S(\mu), $ are non-degenerate, say for $\mu$ in the cone $\CC,$ then we can make use of the following results from (\cite{MRIn}, Prop. 2.3 and Theorem 3.1).
\begin{proposition}\label{tildega}
Let $ S\in\s\p(n,\R) $ be non-degenerate and choose $ \kappa $ so small, that 
$$ \det(\sinh(tS))\ne 0,\  0<\val t<\kappa .$$
Then for $ 0<\val t<\kappa,$
\begin{eqnarray}\label{gaexp}
 \nn \tilde \ga_{t,S}(z)=\frac{e^{-i\frac{\pi}{4}k(JS)\,\sgn(t)}}{2^{n}\val{\det(\sinh(tS/2))}^{\frac{1}{2}}}e^{-\frac{i\pi}{2} ^t\!zJ \coth(t S/2) z }. 
\end{eqnarray}
Here $ k(JS) $ denotes the number of positive eigenvalues of the symmetric matrix $ JS $ minus the number of its negative eigenvalues.
 \end{proposition}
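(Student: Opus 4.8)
The plan is to identify $\tilde\gamma_{t,S}$ as the fundamental solution of an evolution equation and then to solve that equation by a Gaussian ansatz. Since $\tilde T_{t,S}\varphi=\varphi\times\tilde\gamma_{t,S}$ is twisted convolution with the kernel on the right, and since each $\tilde V_j'$ arises from a left-invariant vector field on the Heisenberg group and therefore acts on a twisted convolution through its second factor, $\tilde V_j'(\varphi\times\psi)=\varphi\times(\tilde V_j'\psi)$, and hence $\tilde\Delta_S(\varphi\times\psi)=\varphi\times(\tilde\Delta_S\psi)$. Differentiating $\tilde T_{t,S}=e^{\frac{it}{4\pi}\tilde\Delta_S}$ in $t$ then yields
\[
\partial_t\tilde\gamma_{t,S}=\frac{i}{4\pi}\tilde\Delta_S\tilde\gamma_{t,S},\qquad \tilde\gamma_{0,S}=\delta_0,
\]
the Dirac mass at the origin being the identity for $\times$. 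So everything reduces to solving this Schr\"odinger equation with quadratic generator and checking that its kernel is the asserted Gaussian.

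First I would insert the ansatz $\tilde\gamma_{t,S}(z)=c(t)\,e^{-\frac{i\pi}{2}\,\trans z\,M(t)\,z}$ with $M(t)$ complex symmetric. Using $\tilde V_j'=\partial_{z_j}-i\pi(Jz)_j$ and writing $N:=M+J$, a direct computation gives
\[
\tilde V_j'\tilde V_k'\,e^{-\frac{i\pi}{2}\trans z M z}=\bigl(-i\pi N_{kj}-\pi^2(Nz)_j(Nz)_k\bigr)\,e^{-\frac{i\pi}{2}\trans z M z},
\]
hence, with $A=SJ$ (equivalent to $S=-AJ$, since $J^{-1}=-J$),
\[
\tilde\Delta_S\,e^{-\frac{i\pi}{2}\trans z M z}=\bigl(-i\pi\,\mathrm{tr}(AN)-\pi^2\,\trans z\,\trans N A N\,z\bigr)\,e^{-\frac{i\pi}{2}\trans z M z}.
\]
Matching the quadratic and the constant terms in the evolution equation separates it into the matrix Riccati equation $\dot M=\tfrac12\,\trans N A N=\tfrac12\,\trans{(M+J)}A(M+J)$ and the scalar equation $\dot c/c=\tfrac14\,\mathrm{tr}\bigl(A(M+J)\bigr)$.

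Next I would solve the Riccati equation by the standard linearization, under which its solutions are transported by the linear flow $e^{tS/2}$; the solution singled out by $\tilde\gamma_{0,S}=\delta_0$ (equivalently, $M(t)$ blowing up like $t^{-1}$ as $t\to0$) is
\[
M(t)=J\coth(tS/2),
\]
which one checks is symmetric and solves the equation using the relation $\trans S=JSJ$ valid for $S\in\s\p(n,\R)$. This is exactly the exponent in Proposition \ref{tildega}. For the modulus of the prefactor one observes, using $\mathrm{tr}(S)=0$ and $AN=-S(\coth(tS/2)+I)$, that $\tfrac14\,\mathrm{tr}(AN)=-\tfrac12\,\tfrac{d}{dt}\log\det\sinh(tS/2)$, so that the scalar equation integrates to $\val{c(t)}=2^{-n}\val{\det\sinh(tS/2)}^{-1/2}$, the constant $2^{-n}$ and the overall normalization being fixed by the limit $\tilde\gamma_{t,S}\to\delta_0$ as $t\to0$.

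The genuinely delicate point — and the main obstacle — is the phase: the ODEs above determine $\val{c(t)}$ and the Gaussian exponent, but not the branch of $(\det\sinh(tS/2))^{-1/2}$. To pin it down I would pass to Howe's oscillator semigroup, replacing the imaginary time $it$ by a complex time $\tau$ lying in the domain where $e^{\frac{\tau}{4\pi}\tilde\Delta_S}$ has a genuinely decaying Gaussian kernel, so that its prefactor is the \emph{principal} branch of the square root and the normalization is unambiguous and fixed by $\tau\to0$. Rotating $\tau$ back to the imaginary axis and tracking the argument of $\det\sinh(tS/2)$ as the eigenvalues of $tS$ move produces precisely the metaplectic (Maslov-type) index $e^{-i\frac{\pi}{4}k(JS)\,\sgn(t)}$, where $JS$ is symmetric and $k(JS)$ is its signature. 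This is the reason for restricting to $0<\val t<\kappa$, so that $\det\sinh(tS)\ne0$ and no eigenvalue has yet wrapped around the origin; the careful bookkeeping of this cocycle is the content of the cited computation in \cite{MRIn}.
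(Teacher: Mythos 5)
You should first be aware that the paper itself contains no proof of Proposition \ref{tildega}: it is imported verbatim from \cite{MRIn} (Prop.\ 2.3 and Theorem 3.1), so your proposal can only be measured against that source — and there your reconstruction is essentially the argument in question, and it is sound. The algebra checks out: $A=SJ$ because $J^{-1}=-J$; the Gaussian ansatz produces exactly your Riccati equation $\dot M=\frac12\,{}^t(M+J)A(M+J)$ and scalar equation $\dot c/c=\frac14\,\mathrm{tr}\bigl(A(M+J)\bigr)$; $M(t)=J\coth(tS/2)$ is symmetric and solves the Riccati equation via ${}^tS=JSJ$, since ${}^t(M+J)A(M+J)=-J\bigl(\coth(tS/2)-\Id\bigr)S\bigl(\coth(tS/2)+\Id\bigr)=-JS\sinh^{-2}(tS/2)=2\dot M$; and $\mathrm{tr}(S)=0$ integrates the scalar equation to $\val{c(t)}=2^{-n}\val{\det\sinh(tS/2)}^{-1/2}$, with the kernel identified with $\tilde\ga_{t,S}$ by uniqueness of the unitary group generated by $\frac{i}{4\pi}\tilde\Delta_S$. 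Two remarks. (i) Your claim that the $\tilde V_j'$ differentiate the \emph{second} factor of a twisted convolution is convention-dependent: with the paper's displayed conventions ($\tilde V_j'=\partial_{z_j}-i\pi(Jz)_j$ and $\va\times\psi(z)=\int\va(z-w)\psi(w)e^{-i\pi\,{}^tw Jz}\,dw$) a direct computation gives $\tilde V_j'(\va\times\psi)=(\tilde V_j'\va)\times\psi$, i.e.\ the \emph{first} factor. This is harmless for your argument: since $\de_0$ is a two-sided identity for $\times$, applying $\partial_t\,e^{\frac{it}{4\pi}\tilde\Delta_S}=\frac{i}{4\pi}\tilde\Delta_S\,e^{\frac{it}{4\pi}\tilde\Delta_S}$ to $\va=\de_0$ yields your Cauchy problem directly, whichever factor the fields act through. (ii) For the phase you correctly name the mechanism of \cite{MRIn} (boundary values of Howe's semigroup), but on $0<\val t<\kappa$ it can be closed more cheaply inside your own scheme: $\dot c/c=\frac14\mathrm{tr}\bigl(A(M(t)+J)\bigr)$ is \emph{real}, so $\arg c(t)$ is constant on $(0,\kappa)$, and its value is pinned down at $t\to0^+$ by the normalization $\tilde\ga_{t,S}\to\de_0$: since $M(t)\sim\frac2t\,JS^{-1}$ and $JS^{-1}=-\,{}^tS^{-1}(JS)S^{-1}$ has signature $-k(JS)$, the Fresnel integral $\int_{\R^{2n}}e^{-\frac{i\pi}{2}\,{}^tz M(t)z}\,dz=2^{n}e^{i\frac{\pi}{4}k(JS)}\val{\det M(t)}^{-1/2}$ (valid for small $t>0$) forces $\arg c(t)=-\frac{\pi}{4}k(JS)$ on all of $(0,\kappa)$, with the opposite sign for $t<0$ — whence the factor $\sgn(t)$; the hypothesis $\det\sinh(tS)\ne0$ is exactly what keeps $\sinh(tS/2)$ invertible, so the Gaussian form and the ODE argument persist on the whole interval. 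In short: your proof is correct and follows the route of the cited source, and the one step you defer to \cite{MRIn} can in fact be completed within your own framework.
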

 
 Passing back to our original coordinates $x,$ we obtain
 
 \begin{corollary}\label{s2.6}
 If $A$ is non-degenerate, then for $\phi\in\SS(\g_1),$
\begin{eqnarray*}
 T^\mu_t\phi &= &\phi\ti_\mu \ga_{t,S_\mu} ,
\end{eqnarray*}
where $S_\mu=-AJ_\mu$ and 
\begin{eqnarray}\label{gaexp}
 \nn  \ga_{t,\mu }(x)=\frac{e^{-i\frac{\pi}{4}k(J_\mu S_\mu)\, \sgn (t)}}{2^{n}\val{\det(\sinh(tS_\mu/2))}^{\frac{1}{2}}}e^{-\frac{i\pi}{2} ^t\!xJ_\mu \coth(t S_\mu/2) x }. 
\end{eqnarray}
\end{corollary}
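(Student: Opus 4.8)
The plan is to obtain Corollary \ref{s2.6} by simply transporting the formula of Proposition \ref{tildega} from the symplectic coordinates $z$ back to the original coordinates $x$ via the substitution $z=R(\mu)\inv x$ of \eqref{sympcoo}. Recall that $\tilde f(z)=f(R(\mu)z)$ and that, by \eqref{exptilde}, $\tilde T^\mu_t\tilde\phi=\tilde\phi\times\tilde\ga_{t,S(\mu)}$ with $\tilde\ga_{t,S(\mu)}$ given by Proposition \ref{tildega}. So it suffices to define $\ga_{t,\mu}$ through $\ga_{t,\mu}(R(\mu)z):=\tilde\ga_{t,S(\mu)}(z)$, to identify this $\ga_{t,\mu}$ with the asserted expression, and to check that $\tilde\phi\times\tilde\ga_{t,S(\mu)}$ corresponds to $\phi\times_\mu\ga_{t,\mu}$ under the change of variables.

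The main algebraic input is the conjugacy $S(\mu)=R(\mu)\inv S_\mu R(\mu)$ recorded in \eqref{s(mu)}, together with the relation $J=\trans R(\mu)J_\mu R(\mu)$, equivalently $J_\mu=\trans R(\mu)\inv J R(\mu)\inv$. First I would transform the amplitude: since both $\det$ and the holomorphic functional calculus are invariant under conjugation, $\det\sinh(tS(\mu)/2)=\det\sinh(tS_\mu/2)$ and $\coth(tS(\mu)/2)=R(\mu)\inv\coth(tS_\mu/2)R(\mu)$. Substituting $z=R(\mu)\inv x$ into the Gaussian exponent and using the displayed relations then yields
\[
\trans z\,J\coth(tS(\mu)/2)\,z=\trans x\,J_\mu\coth(tS_\mu/2)\,x,
\]
so that the quadratic form and the determinantal prefactor immediately take the claimed shape.

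The one point requiring a separate argument is the integer $k$ in the phase factor, since it is a discrete invariant rather than something that transforms continuously. Here I would observe that $S_\mu\in\s\p(\g_1,\om_\mu)$ makes $J_\mu S_\mu$ symmetric, that $JS(\mu)$ is symmetric as well, and that the two are congruent: from the relations above, $J_\mu S_\mu=\trans R(\mu)\inv\,(JS(\mu))\,R(\mu)\inv$. By Sylvester's law of inertia a congruence preserves the signature, whence $k(J_\mu S_\mu)=k(JS(\mu))$ and the phase factor is unchanged.

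Finally I would match the two twisted convolutions. Setting $x=R(\mu)z$ and $y=R(\mu)w$, the relation $d_\mu x=dz$ of \eqref{dzdmu} gives $d_\mu y=dw$, while $\om_\mu(R(\mu)z,R(\mu)w)=\trans z\,J\,w$ by $J=\trans R(\mu)J_\mu R(\mu)$. Inserting these into the definition of $\times_\mu$ and comparing with the $z$-twisted convolution appearing in \eqref{exptilde} shows that $(\phi\times_\mu\ga_{t,\mu})(R(\mu)z)=(\tilde\phi\times\tilde\ga_{t,S(\mu)})(z)=\tilde T^\mu_t\tilde\phi(z)$, which is exactly $T^\mu_t\phi$ read in the $z$-coordinates. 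Combining the three computations proves the corollary; I expect the main bookkeeping obstacle to be fixing the sign and normalization conventions in the twisted convolutions, since the two forms in the Definition differ by the sign of the phase and the antisymmetry of $J$ must be used to reconcile $\trans z\,J\,w$ with $\trans w\,J\,z$, so that no stray sign or factor of $2^n$ survives.
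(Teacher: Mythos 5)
Your proposal is correct and follows essentially the same route as the paper's own proof: the conjugation identities $S(\mu)=R(\mu)\inv S_\mu R(\mu)$, $J={}^{t}\!R(\mu)J_\mu R(\mu)$ (giving invariance of the determinant and $\coth$ factors and, via Sylvester's law of inertia, $k(J_\mu S_\mu)=k(JS(\mu))$, exactly as in \eqref{equal}), followed by the change of variables $y=R(\mu)w$ with $d_\mu y=dw$ from \eqref{dzdmu} to match the two twisted convolutions. The paper carries this out by writing the convolution integral explicitly rather than by your slightly more structural covariance statement $\ga_{t,\mu}(R(\mu)z):=\tilde\ga_{t,S(\mu)}(z)$, but this is a presentational difference only, and the sign bookkeeping you flag at the end (reconciling ${}^{t}\!z\,J\,w$ with ${}^{t}\!w\,J\,z$ via antisymmetry of $J$) is precisely the one point where the paper's own displayed conventions must likewise be read with care.
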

\proof

Let us recall that
\begin{eqnarray}\label{}
\nonumber A(\mu)&=& R(\mu)\inv A \trans R(\mu)\inv \\
\nn J&=&{^{t}\!R(\mu)}J_\mu R(\mu),\\
\nn  S(\mu)&=& -A(\mu)J=- R(\mu)\inv A \trans R(\mu)\inv \trans R(\mu)J_\mu R(\mu)\\
\nn  &=&R(\mu)\inv S_\mu  R(\mu).
\end{eqnarray}
Therefore
\begin{eqnarray}\label{equal}
   k(J_\mu S_\mu)&=&k(JS(\mu)) ,\\
\nn \det(\sinh(tS(\mu)/2))&=&\det(\sinh(tS_\mu/2)),\\
\nn  J\coth(tS(\mu)/2)&=&\trans R(\mu)J_\mu\coth(tS_\mu/2)R(\mu),
\end{eqnarray}
where we have made use of  Sylvester's theorem in the first identity. 
Then  (compare \cite{MRMaAn} (1.14))
\begin{eqnarray}\label{}
\nn T^{\mu}_{t}\phi(x)&=&\tilde T^{\mu}_{t,S(\mu)}(\phi\circ R(\mu))( R(\mu)\inv(x))\\
\nn  &=&((\phi\circ R(\mu))\times \tilde \ga_{t,S(\mu}))( R(\mu)\inv(x)). 
\end{eqnarray}
Putting
\begin{eqnarray}\label{}
\nn c_{t,\mu}&:=&\frac{e^{-i\frac{\pi}{4}k(JS(\mu))\,\sgn(t)}}{2^{n}\val{\det(\sinh(tS(\mu)/2))}^{\frac{1}{2}}}=\frac{e^{-i\frac{\pi}{4}k(J_\mu S_\mu)\,\sgn(t)}}{2^{n}\val{\det(\sinh(tS_\mu/2))}^{\frac{1}{2}}},\\
\nn C(t,\mu)&:=&J\coth(tS(\mu)/2)=\trans R(\mu)J_\mu\coth(tS_\mu/2)R(\mu),\\
\nn e_{ C(t,\mu)}(z)&:=&e^{-i\frac{\pi}{2} z^tC(t,\mu) z },
\end{eqnarray}
we find that
\begin{eqnarray}\label{}
\nn T^{\mu}_t \phi(x)&=&c_{t,\mu}((\phi\circ R(\mu)) \times  e_{ C(t,\mu)}(R(\mu)\inv x), \quad x\in\g_1,
\end{eqnarray}
and so by \eqref{equal} and \eqref{dzdmu}
\begin{eqnarray}\label{explcom}
\nn &&
 T^{\mu}_{t}\phi(x)\\
\nn  &=&
c_{t,\mu}((
\phi\circ R(\mu)) \times e_{ C(t,\mu)}(R(\mu)\inv x)\\
\nn  &=&
c_{t,\mu}\int_{\g_1}\phi(R(\mu)y)\,e_{C(t,\mu)}(R(\mu)\inv(x)-y)
e^{i\pi \trans( R(\mu)\inv(x))Jy}dy\\
\nn  &=&
c_{t,\mu}\int_{\g_1}\phi(R(\mu)y)e^{-i\frac{\pi}{2}(^t\!(R(\mu)\inv(x)-y)C(t,\mu) ((R(\mu)\inv(x)-y) }
e^{i\pi\trans( R(\mu)\inv(x))Jy}\,dy\\
\nn  &=&
c_{t,\mu}\vert\det(R(\mu))\vert\inv\int_{\g_1}\phi(y)e^{-i\frac{\pi}{2} ^t\!(R(\mu)\inv(x-y))C(t,\mu) R(\mu)\inv(x-y) }
e^{i\pi(^{t}\! x ^{t}\!R(\mu)\inv JR(\mu)\inv y)}dy\\
\nn  &=&
c_{t,\mu}\int_{\g_1}\phi(y)e^{-i\frac{\pi}{2} ^t\!(x-y)J_\mu \coth(t S_\mu/2)  (x-y) }
e^{i\pi^{t}\! xJ_\mu  y}d_\mu y
,
\end{eqnarray}
which proves our claim.
\qed

\medskip

Since the matrix $ J_\mu\coth(tS_\mu/2)$ is  symmetric, it follows that for our function $f=w(\cdot,0)$ we have 
\begin{equation}\label{fourfor}
 T^{\mu}_{t}f^\mu(x)  =  c^{1}_{t,\mu}(x)\int_{\g_1}f_t^\mu( y)e^{i\pi \trans (J_\mu(\coth(tS_\mu/2)-\Id)x) y }d_\mu y,
\end{equation}
where 
$$c^{1}_{t,\mu}(x):= c_{t,\mu}e^{-i\frac{\pi}{2} \trans x J_\mu\coth(tS_\mu/2)x }, $$ 
and where
\begin{eqnarray*}
 \nn f_t^{\mu}(x) := e^{-i\frac{\pi}{2} \trans x J_\mu\coth(tS_\mu/2) x}\,f^\mu(x). 
\end{eqnarray*}
Define now the Euclidean Fourier transform on $\g_1$  as in Hardy's theorem by
\begin{eqnarray}\label{defhardy}
\nn \widehat \phi(x)&:=&\int_{g_1}\phi(y)e^{{-i x\cdot y}}\,dy, \quad x\in \g_1. 
\end{eqnarray}
Then by (\ref{fourfor})
\begin{eqnarray}\label{ttfhat}
 T^\mu_t f^\mu (x)&=&c^{1}_{t,\mu}(x)\widehat{f_t^\mu}(-\pi J_\mu(\coth(tS_\mu/2)-\Id)x ).
\end{eqnarray}

For $t=4\pi T \ne 0$ fixed and $\mu\in\CC$ sufficiently small, we have that $J_{\mu}( \coth(t S_{ \mu}/2)-\Id)$ is invertible. In fact,
\begin{eqnarray*}
 \coth(t S_{ \mu}/2)-\Id =  (\Id+O(|\mu|^2))\Big(\frac {tS_\mu}2\Big)^{-1}-\Id=\frac 2 t(\Id+O(|\mu|^2)-\frac t2S_\mu)S_\mu^{-1}
 =\frac 2 t(\Id+O(|\mu|))S_\mu^{-1},
\end{eqnarray*}
so that 
\begin{eqnarray}
\nn (J_{\mu}( \coth(t S_{ \mu}/2)-\Id))\inv  &=& \frac t2 S_\mu(\Id+O(|\mu|))J_\mu^{-1}
= \frac t2 S_\mu J_\mu^{-1}+O(\mu)\\
&=& -\frac t2 A+O(|\mu|). \label{estforA} 
\end{eqnarray}

Putting  
$$ v:=-\pi J_{\mu}( \coth(t S_{ \mu}/2)-\Id)(x), $$ 
in combination with \eqref{ttfhat} this implies that for $\mu\in\CC$ sufficiently small,
\begin{eqnarray}
 \nn \val {\widehat{f_t^\mu}(v)}
  &\leq&|c_{t,\mu}|^{-1}
 \vert{T^{\mu}_t f^{\mu}(-(\pi J_{\mu}( \coth(t S_{ \mu}/2)-\Id)\inv (v))}\vert\\
 &=&|c_{t,\mu}|^{-1}\vert{T^{\mu}_t f^{\mu}((2TA+O(|\mu|)) v)}\vert \label{estima}.
 \end{eqnarray}

Combining (\ref{woen}), (\ref{Loen}) and  (\ref{estima}), we see that for $x,v\in \g_1$
\begin{eqnarray}\label{}
\label{woenn} |f_t^\mu(x)|&=&|f^\mu(x)| \leq  e^{-\frac{\val x^{2}}{4a}}\\
\label{Loenn2}  \val{\widehat {f_t^\mu}(v)}&\leq & C|c_{t,\mu}|^{-1}e^{-\frac{|(2TA+O(|\mu|))v |^2}{4b}}.
\end{eqnarray}
Since $A$ is non-degenerate, given $\ve>0$ we may choose  some  $\de>0 $, such that for every $ v\in\g_1$ and $\mu\in\CC$ with $ \val \mu<\de $,
$$
|(2TA+O(|\mu|)) (v)|\ge \frac {|2TA v|}{1+\ve},
$$
so that 
\begin{equation}\label{Loenn2} 
\val{\widehat {f_t^\mu}(v)}\leq  C|c_{t,\mu}|^{-1}e^{-\frac{|TA v|}{b(1+\ve)^2}}.
\end{equation}

Assume now  that  $ ab<T^{2}\noop A^{2}.$ Then we may choose $\ve>0$ so small that $ab(1+\ve)^2<T^{2}\noop A^{2}.$ Then the relations  (\ref{woenn}) and (\ref{Loenn2}) in combination with Corollary \ref{hrn} imply that for $\mu\in\CC$ sufficiently small we have that $f_t^\mu= 0,$ hence also $f^\mu=0.$ By our previous Observation, this concludes the proof of Theorem \ref{uniqueness} in this sub-case.

\subsubsection{The case when $ \g $ is MW and $ A $ is degenerate}\label{regdegen} $ $
Let us next assume that $ \g $ is MW, but that $ A\ne 0 $ is singular. We shall reduce this case to the previous one   
by means of a perturbation argument. 

To this end, we shall have to perform some calculations within Howe's oscillator semigroup, which is why we shall work within the symplectic coordinates $z$ for $\g_1$ given by \eqref{sympcoo}.
\medskip

Recall first that the sub-Laplacian $\LL= \sum_j V_j^{2} $, which generates the heat semi-group  with heat kernels $ h_t $ on $ G $, corresponds to the matrix $A= \Id $. We therefore put for $ \mu\in \CC $
\begin{eqnarray}\label{dmubudef}
B(\mu)&:=&-R(\mu)\inv \Id\, ^{t}\!R(\mu)\inv=-{R(\mu)\inv} ^{t}\!R(\mu)\inv,\\
\nn D(\mu)&:=&-B(\mu)J.
\end{eqnarray}
Since $ -B(\mu) $ is positive definite, the operator  
\begin{eqnarray}\label{}
\nn \tilde P^{\mu}_\ta&:=&e^{\frac{i\tau}{4\pi}\tilde \DE _{D(\mu)}}, \quad \Im \tau\geq 0 , 
\end{eqnarray}
is a well-defined contraction operator on $ L^{2}(\g_1) $ when $ \Im \ta>0 $, lying in Howe' s oscillator semi-group (c.f. \cite{Howe}). Moreover, by Theorem 5.2 in \cite{MRAnMa}, 
\begin{eqnarray}\label{tildep}
\tilde P^{\mu}_\ta \phi&=&\phi \times \tilde\Ga^{\mu}_\ta, \quad \Im \ta\geq 0,  
\end{eqnarray}
where $ \tilde\Ga^{\mu}_\ta $ is given by
\begin{eqnarray}\label{exppa}
 \tilde\Ga^{\mu}_\ta(z)&=&\frac{1}{2^{n}{(\det(\sinh(\ta \frac{D(\mu)}{2})))}^{\frac{1}{2}}}e^{-\frac{i\pi}{2} ^t\!zJ \coth(\ta \frac{D(\mu)}{2}) z }, 
\end{eqnarray}
for a suitable choice of the square root. 

Observe that by  narrowing down the cone $\CC,$ we may assume that  for $\mu\in\CC$ the $ R(\mu) $'s are chosen  in (\ref{choose rmu}) so that
\begin{eqnarray}\label{rudom}
 \noop{R(\mu)\inv}&\leq&C \val \mu^{1/2},\quad \noop{D(\mu)}\leq C\val\mu, \quad  \noop{D(\mu)\inv}\leq C\val\mu^{-1}.
\end{eqnarray}

\begin{lemma}\label{gaest}
Given $ T\in \R\setminus\{0\}$ and $\ve>0,$  there exists an $ \ve_0=\ve_0(T,\ve)>0, \, \ve_0<\ve,$ such that for every $ \ve' \in ]0,\ve_0[,$ every $\mu\in\CC$ such that $|\mu|<\ve_0$  and every $\si\in[0,1]$ we have
\begin{eqnarray}\label{}
\nn \val{\tilde\Ga^{\mu}_{4\pi T \ve' (\si+i)}(z)}&\leq&C (\ve'  \val\mu)^{-n}e^{-\frac{\val{R(\mu)z}^{2}}{4\ve}}, \quad z\in\g_1. 
\end{eqnarray}

 \end{lemma}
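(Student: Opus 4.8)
The plan is to bound the two factors of the kernel $\tilde\Ga^\mu_\ta$ in \eqref{exppa} separately, where throughout $\ta:=4\pi T\ve'(\si+i)$ and $W:=\ta D(\mu)/2$. By \eqref{rudom} we have $\noop{W}\le C\val T\,\ve'\val\mu$, uniformly in $\si\in[0,1]$, so $W$ is small in operator norm once $\ve'\val\mu$ is small; this is the regime in which $\sinh$ and $\coth$ are controlled by the leading terms of their expansions, with remainders converging uniformly. I first treat the case $T>0$, so that $\Im\ta>0$ and $\tilde P^\mu_\ta$ genuinely lies in Howe's oscillator semi-group; the case $T<0$ is obtained by replacing $i$ by $-i$, i.e. by complex conjugation.

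For the prefactor I would write $\sinh(W)=W(\Id+O(\noop W^2))$, so that $\det(\sinh W)=(\ta/2)^{2n}\det(D(\mu))(1+O(\noop W^2))$. As $\val\ta$ is comparable to $\ve'$ and, by \eqref{rudom}, every eigenvalue of $D(\mu)$ has modulus comparable to $\val\mu$ (whence $\val{\det D(\mu)}$ is comparable to $\val\mu^{2n}$), this gives
\[
\frac{1}{2^n\val{\det(\sinh W)}^{1/2}}\le C(\ve'\val\mu)^{-n},
\]
which is precisely the prefactor appearing in the lemma.

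For the Gaussian factor, note that $\val{\tilde\Ga^\mu_\ta(z)}$ contributes the exponential $\exp\big(\frac\pi2\Im(\trans z\,J\coth(W)\,z)\big)$, since $\Re(-\frac{i\pi}2 w)=\frac\pi2\Im w$. Using $\coth(W)=W^{-1}(\Id+O(\noop W^2))$ with $W^{-1}=(2/\ta)D(\mu)^{-1}$, together with the identity $JD(\mu)^{-1}=-B(\mu)^{-1}=\trans R(\mu)R(\mu)$ (which follows from $D=-BJ$, $J^{-1}=-J$ and $B(\mu)=-R(\mu)^{-1}\trans R(\mu)^{-1}$), the leading part of the quadratic form is $(2/\ta)\val{R(\mu)z}^2$. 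Since $\Im(2/\ta)=-1/(2\pi T\ve'(1+\si^2))$, the leading contribution to the exponent is
\[
\frac\pi2\,\Im\!\Big(\frac2\ta\Big)\val{R(\mu)z}^2=-\frac{\val{R(\mu)z}^2}{4T\ve'(1+\si^2)}.
\]

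The \emph{main obstacle} is to control the errors uniformly in $\si\in[0,1]$ and $\mu\in\CC$. Rewriting each correction as a quadratic form in the variable $R(\mu)z$ and using \eqref{rudom}, the sub-leading contributions to the coefficient of $\val{R(\mu)z}^2$ are smaller than the leading coefficient, which is of size $(\ve')^{-1}$, by a factor $O((\ve')^2\val\mu^2)$; hence they are negligible once $\ve'\val\mu$ is small. Since $1+\si^2\le2$ on $[0,1]$, it then suffices to fix $\ve_0=\ve_0(T,\ve)\in\,]0,\ve[$ so small that $2T\ve_0\le\ve$ and that the remainders diminish the leading coefficient by at most a harmless factor; for all $\ve'\in\,]0,\ve_0[$, $\val\mu<\ve_0$ and $\si\in[0,1]$ the exponent is then at most $-\val{R(\mu)z}^2/(4\ve)$. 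Combining this with the prefactor bound yields the asserted estimate.
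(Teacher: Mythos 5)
Your proof is correct and takes essentially the same route as the paper's: expand $\sinh$ and $\coth$ of $W=\ta D(\mu)/2$ in the small-norm regime guaranteed by \eqref{rudom}, identify the leading quadratic form $\frac{2}{\ta}\,{}^t\!R(\mu)R(\mu)$ via $JD(\mu)^{-1}={}^t\!R(\mu)R(\mu)$, absorb the remainders using $\val z^{2}\leq C\val\mu\,\val{R(\mu)z}^{2}$, and bound the determinant prefactor by $C(\ve'\val\mu)^{-n}$, which is exactly the paper's argument. Your explicit treatment of the sign of $T$ (passing to $-i$, i.e. conjugating, when $T<0$) addresses a point the paper leaves implicit, but otherwise the two proofs coincide.
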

\begin{proof}
If $ \ta=4\pi T \ve' (\si+i) $ then under our hypotheses, (\ref{rudom}) implies that 
\begin{eqnarray}\label{}
\nn J\coth(\frac{\ta D(\mu)}{2})&=&\frac{2}{\ta}JD(\mu)\inv +O(\val\ta |\mu|)\\
\nn &=&\frac{2}{\ta}\ ^{t}\!R(\mu)R(\mu)+O(\val\ta |\mu|),
\end{eqnarray}
so that 
\begin{eqnarray}\label{}
\nn \Re [\frac{-i\pi}{2}J\coth(\frac{\ta D(\mu)}{2})] &=&-\frac{^{t}\!R(\mu)R(\mu)}{4\ve' T (1+\si^{2})}+O(\ve' ). 
\end{eqnarray}
Hence, using again \eqref{rudom}, we have 
\begin{eqnarray}\label{}
\nn \Re [\frac{-i\pi}{2}\trans z J\coth(\frac{\ta D(\mu)}{2})z] &=&-\frac{\vert R(\mu)z\vert ^{2}}{4\ve' T (1+\si^{2})}+O(\ve' \val z^{2})\\
\nn&\le& -\frac{\vert R(\mu)z\vert ^{2}}{4\ve' T (1+\si^{2})}+C\ve'|\mu| |R(\mu)z|^2. 
\end{eqnarray}
Similarly, $$ \val{\det\sinh(\frac{\ta D(\mu)}{2})}^{{-1/2}}\leq C(\ve' \vert  \mu\vert )^{{-m/2}} .$$
The claim follows now easily.
 \end{proof}
Let  $ \ve>0,$ and assume that $ \ve'>0 $ and $\mu\in\CC$ are chosen sufficiently small so that the conclusion of  Lemma \ref{gaest} holds true. Since 
$ \va \times \psi(z)\leq \val\va\ast\val \psi(z) $, where $ \ast $ denotes ordinary convolution on Euclidean space, (\ref{Loenz}) and Lemma \ref{gaest} in combination with standard convolution identities for heat kernels on Euclidean 
space imply that
\begin{eqnarray}\label{tildePestim}
 \val{\tilde P^{\mu}_{4\pi T\ve' (1+i)}\tilde T^{\mu}_{4\pi T}\tilde{f^{\mu}}(z)}&\leq &C(\ve'\val \mu)^{-n} e^{-\frac{\val{ R(\mu)z}^{2}}{4(b+\ve)}}. 
\end{eqnarray}

Putting again $t:=4\pi T $, we may re-write 
\begin{eqnarray}\label{}
\nn \tilde P^{\mu}_{t\ve' (1+i)}\tilde T^{\mu}_t&=&e^{\frac{i t\ve'(1+i) }{4\pi}\tilde\DE_{D(\mu)}}e^{{\frac{it}{4\pi}}\tilde \DE_{S(\mu)}},\\
\nn &=&e^{\frac{- t\ve' }{4\pi}\tilde\DE_{D(\mu)}}e^{\frac{i t\ve' }{4\pi}\tilde\DE_{D(\mu)}}e^{{\frac{it}{4\pi}}\tilde \DE_{S(\mu)}}.
\end{eqnarray}
In view of Remark \ref{lierel}, we have 
\begin{eqnarray}\label{}
\nn e^{\frac{i t\ve' }{4\pi}\tilde\DE_{D(\mu)}}e^{{\frac{it}{4\pi}}\tilde \DE_{S(\mu)}}&=&e^{{\frac{it}{4\pi}}\tilde \DE_{S_{\ve' }(\mu)}},  
\end{eqnarray}
where 
\begin{eqnarray}\label{}
\nn S_{\ve' }(\mu)&=&\ve'  D(\mu)+S(\mu)+\frac{1}{2}\ve' [ D(\mu),S(\mu)] +\cdots 
\end{eqnarray}
is given by the Baker-Campbell-Hausdorff formula. But, similarly to (\ref{rudom}),
$$
\nn \noop{S(\mu)}\leq C\val\mu \quad \text{  on }\CC, 
$$
so that 
\begin{eqnarray}\label{estimsve}
 S_{\ve' }(\mu)=S(\mu)+\ve'  D(\mu)+O(\ve' \val\mu^{2}). 
\end{eqnarray}
Finally we may write 
\begin{eqnarray}\label{}
\nn \tilde P^{\mu}_{t\ve' (1+i)}\tilde T^{\mu}_t&=&e^{{\frac{it}{4\pi}}\tilde \DE_{S_{\ve' }(\mu)}} [e^{-{\frac{it}{4\pi}}\tilde \DE_{S_{\ve' }(\mu)}}e^{\frac{i \ve'(it) }{4\pi}\tilde\DE_{D(\mu)}}e^{{\frac{it}{4\pi}}\tilde \DE_{S_{\ve' }(\mu)}}].  
\end{eqnarray}
In view of (\ref{brack}), the second factor can be re-written as $ e^{{\frac{i\ve'(it) }{4\pi}}\tilde \DE_{D_{\ve' }(\mu)}} $, with
\begin{eqnarray}\label{Dvepdef}
 \ve' (it)D_{\ve' (\mu)}&=&\exp{{\rm ad}(-t S_{\ve' }(\mu))}(\ve' (it)D(\mu))\\
\nn &=&\ve' (it)D(\mu)-\ve' it^{2}[S_{\ve' }(\mu),D(\mu)] +\cdots\\
\nn &=&i\ve' t[D(\mu)+O(\val\mu^{2})], 
\end{eqnarray}
i.e.,
\begin{eqnarray}
 \nn D_{\ve' }(\mu)&=&D(\mu)+O(\val\mu^{2})\\
  &=&(\Id+O(\val\mu)) D(\mu), \label{dmu}
\end{eqnarray}
where we have applied \eqref{rudom}. So we have finally re-written 
\begin{eqnarray}\label{prodee}
 \tilde P^{\mu}_{t\ve' (1+i)}\tilde T^{\mu}_t&=&e^{{\frac{it}{4\pi}}\tilde \DE_{S_{\ve' }(\mu)}} e^{\frac{i \ve'(it) }{4\pi}\tilde\DE_{D_{\ve' }(\mu)}}.
\end{eqnarray}
Now, since the leading term of $ D_{\ve' }(\mu) $ is $ D(\mu) $, we obtain the following analogue of Lemma \ref{gaest}:
\begin{lemma}\label{gaest1}
Let $ e^{\frac{i \ve'(it) }{4\pi}\tilde\DE_{D_{\ve' }(\mu)}}\va=\va \times \tilde\GA ^{{\ve',\mu}}_{it}  $. Then the conclusion of Lemma \ref{gaest} holds for $ \GA ^{{\ve' ,\mu} }_{4\pi T i\ve'} $ in place of $ \GA^{\mu}_{4\pi T\ve' (\si+i)} $, i.e., 
given $ T\in \R\setminus\{0\}$ and $\ve>0,$  there exists an $ \ve_0=\ve_0(T,\ve)>0, \ve_0<\ve,$  such that for every $ \ve' \in ]0,\ve_0[$ and every $\mu\in\CC$ such that $|\mu|<\ve_0$  we have
\begin{eqnarray}\label{}
\nn \val{\tilde\Ga^{\ve' ,\mu}_{4\pi T i\ve' }(z)}&\leq&C (\ve'  \val\mu)^{-n}e^{-\frac{\val{R(\mu)z}^{2}}{4\ve}}, \quad z\in\g_1. 
\end{eqnarray}
 \end{lemma}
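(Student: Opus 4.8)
The plan is to exploit that the second factor $e^{\frac{i\ve'(it)}{4\pi}\tilde\DE_{D_{\ve'}(\mu)}}$ isolated in \eqref{prodee} is, structurally, nothing but the operator $\tilde P^{\mu}_\ta$ of \eqref{tildep}, with the generating matrix $D(\mu)$ replaced by $D_{\ve'}(\mu)$ and with the purely imaginary parameter $\ta=4\pi Ti\ve'$; this is precisely the ``$\si=0$'' specialization of the parameter $4\pi T\ve'(\si+i)$ of Lemma~\ref{gaest}. (As there, we take $T>0$, so that $\ta$ has nonnegative imaginary part and the flow stays in Howe's oscillator semigroup; the case $T<0$ is handled by the obvious modification.) The one genuinely new ingredient is \eqref{dmu}, which says that $D_{\ve'}(\mu)=(\Id+O(\val\mu))D(\mu)$ is a small \emph{relative} perturbation of $D(\mu)$. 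Since $-B(\mu)=D(\mu)J\inv$ is positive definite by \eqref{dmubudef}, this keeps $D_{\ve'}(\mu)$ inside the admissible cone for small $\val\mu$, so that \eqref{tildep} and \eqref{exppa} apply verbatim and yield
\[
\tilde\Ga^{\ve',\mu}_{4\pi Ti\ve'}(z)=\frac{1}{2^{n}\big(\det(\sinh(2\pi Ti\ve'\,D_{\ve'}(\mu)))\big)^{1/2}}\,e^{-\frac{i\pi}{2}\trans zJ\coth(2\pi Ti\ve'\,D_{\ve'}(\mu))z}.
\]

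With this formula in hand, I would simply rerun the computation of Lemma~\ref{gaest} in the case $\si=0$. Expanding $\coth$ for small argument and using \eqref{rudom} and \eqref{dmu},
\[
J\coth\big(2\pi Ti\ve'\,D_{\ve'}(\mu)\big)=\frac{1}{2\pi Ti\ve'}JD_{\ve'}(\mu)\inv+O(\ve'\val\mu)=\frac{1}{2\pi Ti\ve'}(\Id+O(\val\mu))\trans R(\mu)R(\mu)+O(\ve'\val\mu),
\]
where I have used the identity $JD(\mu)\inv=\trans R(\mu)R(\mu)$ (immediate from \eqref{dmubudef}) together with $D_{\ve'}(\mu)\inv=(\Id+O(\val\mu))D(\mu)\inv$. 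Taking the real part of the corresponding quadratic form and absorbing $\val z^{2}\le C\val\mu\,\val{R(\mu)z}^{2}$ (again \eqref{rudom}) gives
\[
\Re\Big[-\tfrac{i\pi}{2}\trans zJ\coth\big(2\pi Ti\ve'\,D_{\ve'}(\mu)\big)z\Big]\le-\frac{\val{R(\mu)z}^{2}}{4\ve' T}+C\ve'\val\mu\,\val{R(\mu)z}^{2},
\]
i.e.\ exactly the $\si=0$ estimate of Lemma~\ref{gaest}, the relative factor $(\Id+O(\val\mu))$ being harmless for $\val\mu$ small. The prefactor is controlled identically, $\val{\det\sinh(2\pi Ti\ve'\,D_{\ve'}(\mu))}^{-1/2}\le C(\ve'\val\mu)^{-n}$, because $D_{\ve'}(\mu)$ has the same size as $D(\mu)$ by \eqref{dmu} and \eqref{rudom}. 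Choosing $\ve'$, and then $\val\mu$, small enough that $\frac{1}{4\ve' T}-C\ve'\val\mu\ge\frac{1}{4\ve}$ then produces the asserted bound $\val{\tilde\Ga^{\ve',\mu}_{4\pi Ti\ve'}(z)}\le C(\ve'\val\mu)^{-n}e^{-\val{R(\mu)z}^{2}/(4\ve)}$.

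The only step I expect to require real care---and hence the main obstacle---is the uniformity of the implied constants: one must ensure that the $O(\val\mu)$ and $O(\ve'\val\mu)$ errors entering $D_{\ve'}(\mu)\inv$, the quadratic form, and $\det\sinh$ are bounded independently of $\ve'\in\,]0,\ve_0[\,$, and that $D_{\ve'}(\mu)$ genuinely remains in the cone where the semigroup formula \eqref{exppa} is valid. Both points follow from \eqref{dmu} and \eqref{rudom}, provided one checks that the Baker--Campbell--Hausdorff expansion \eqref{Dvepdef} has been organized so that its $\ve'$-dependence is explicit and its remainder is $O(\val\mu^{2})$ uniformly in $\ve'$; granting this, every other step is a verbatim transcription of the proof of Lemma~\ref{gaest}, and no new analytic difficulty arises. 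Conceptually, the lemma records that perturbing the imaginary-time heat flow $e^{\frac{i\ve'(it)}{4\pi}\tilde\DE_{D(\mu)}}$ into $e^{\frac{i\ve'(it)}{4\pi}\tilde\DE_{D_{\ve'}(\mu)}}$ does not degrade the Gaussian decay already established in Lemma~\ref{gaest}.
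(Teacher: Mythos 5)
Your proof is correct and follows essentially the same route as the paper: the paper's proof likewise observes that $\tilde\Ga^{\ve',\mu}_{it}$ is given by formula \eqref{exppa} with $D(\mu)$ replaced by $D_{\ve'}(\mu)$, uses $D_{\ve'}(\mu)\inv=D(\mu)\inv(\Id+O(\val\mu))$ (from \eqref{rudom} and \eqref{dmu}) to expand $J\coth\big(\ta D_{\ve'}(\mu)/2\big)$ for $\ta=4\pi Ti\ve'$, and then reruns the real-part and determinant estimates of Lemma \ref{gaest}, arriving at the bound $-\frac{\val{R(\mu)z}^{2}}{4\ve' T}+C_T\val\mu\,\frac{\val{R(\mu)z}^{2}}{4\ve' T}$, which matches your bookkeeping of the multiplicative $(\Id+O(\val\mu))$ factor and additive remainder. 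The uniformity in $\ve'$ that you flag as the main point of care is exactly what the paper has already secured in \eqref{Dvepdef}--\eqref{dmu} before stating the lemma, so your argument is complete as written.
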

\begin{proof}
The function $ \tilde\Ga^{{\ve' ,\mu}}_{it} $ is again given by formula (\ref{exppa}), only with $ D(\mu) $ replaced by $ D_{\ve' }(\mu) $. Since by \eqref{rudom} and \eqref{dmu}, for $\mu\in\CC$ sufficiently small,
\begin{eqnarray}\label{}
\nn D_{\ve' }(\mu)\inv&=&D(\mu)\inv(I+O(\val\mu)),  
\end{eqnarray}
we obtain for $ \ta=i\ve' t=i \ve'4\pi T $ that
\begin{eqnarray}\label{}
\nn J\coth(\ta \frac {D_{\ve' }(\mu)}2)&=&\frac{2}{\ta}J  D(\mu)\inv (\Id+O(|\mu|)),  
\end{eqnarray}
hence 
\begin{eqnarray}\label{}
\nn \Re[-i\frac{\pi}{2}J\coth(\frac{i\ve't D_{\ve' (\mu)}}{2})] &=&-\frac{^{t}\!R(\mu)R(\mu)}{4\ve'  T}+O\Big(\frac1{\ve'}\Big). 
\end{eqnarray}
In combination with \eqref{rudom} this implies that 
\begin{eqnarray}\label{}
\nn \Re[-i\frac{\pi}{2}\trans z J\coth(\frac{i\ve't D_{\ve' (\mu)}}{2})z] &\le &-\frac{|R(\mu)z|^2}{4\ve'  T}+C_T|\mu|\frac{|R(\mu)z|^2}{4\ve'  T}. 
\end{eqnarray}
By choosing $\ve_0,$ hence $|\mu|,$ sufficiently small, we can now conclude in a similar way as in Lemma \ref{gaest}.
 \end{proof}
 For $\ve'>0$ as in Lemma \ref{gaest1}, let us put
 \begin{eqnarray}\label{}
\nn \tilde f^{\mu}_{\ve' }&=&e^{{-\frac{\ve'  4\pi T}{4\pi}}\tilde \DE_{D_{\ve' } (\mu)}}(\tilde f^{\mu}). 
\end{eqnarray}

Our assumption   (\ref{Loenz}) in   combination with Lemma \ref{gaest1} now imply that 
\begin{eqnarray}\label{ftildeve}
\val{\tilde f^{\mu}_{\ve' }(z)}&\leq&C{(\ve'\val \mu)^{-n} } e^{-\frac{\val{R(\mu)z}^{2}}{4(a+\ve)}}. 
\end{eqnarray}
Moreover, by (\ref{prodee}), we may re-write (\ref{tildePestim})  as
\begin{eqnarray}\label{newestb}
 \val{e^{iT\tilde \DE_{S_{\ve' }(\mu)}}(\tilde f^{\mu}_\ve)(z)}&\leq& C{(\ve'\val \mu)^{-n} } e^{-\frac{\val{R(\mu)z}^{2}}{4(b+\ve)}},  
\end{eqnarray}
where by \eqref{estimsve}
$$
 S_{\ve' }(\mu)=-A_{\ve'} (\mu)J,
$$
with
\begin{eqnarray}\label{}
\nn A_{\ve'} (\mu)&=&(S(\mu)+\ve'  D(\mu)+O(\ve' \val\mu^{2}))J\\
\nn &=&A(\mu)+\ve'  B(\mu)+O(\ve'  \val\mu^{2}).  
\end{eqnarray}
Now observe that in the original coordinates $x,$  $ A_{\ve' }(\mu) $ corresponds to the matrix
\begin{eqnarray}\label{}
\nn A^{\mu}_{\ve' }&=&A-\ve'  \Id +O(\ve' \val \mu), 
\end{eqnarray}
hence is regular for $ \ve'>0 $ and $ \mu $ sufficiently small. Notice here also that
$ \noop{A^{\mu}_{\ve' }}\to \noop A $ as $ \ve'\to 0 $. So we have essentially reduced ourselves to the case treated in Subsection \ref{generic}.

More precisely, for  $ \mu\in \CC $ sufficiently small, by passing back to the original coordinates $x,$ we can conclude as in Subsection \ref{generic} from (\ref{ftildeve}) and (\ref{newestb}) that 
\begin{eqnarray}\label{0smallve}
  \tilde f^{\mu}_{\ve' }=0 \text{ whenever  }ab< T^{2}\noop A,
\end{eqnarray}
 provided $ \ve$ (hence $\ve'<\ve$) is sufficiently small (the fact that $ A^{\mu}_{\ve' }$ now also depends on $\mu$  in a mild way has no consequences for our argument). Notice also that, according to Lemma \ref{gaest} and \ref{gaest1},  this conclusion holds true for all $\mu\in\CC$ such that $|\mu|<\ve_0(T,\ve),$  independently of $\ve',$ provided $\ve'$ is sufficiently small.

Finally if we put $ B_{\ve' }(\mu):=D_{\ve' }(\mu)J$, then by (\ref{Dvepdef}) $B_{\ve'}=B(\mu)+O(\val{\mu}^{2}), $ and  we see that $ \Re (-B_{\ve' }(\mu))>0 $ for $ \mu $ sufficiently small. Therefore Theorem 5.5 in \cite{MD}  implies that $ \tilde f^{\mu}_{\ve' }\to \tilde f^{\mu} $ in $ L^{2} (\g_1)$, since $\ve'T D_{\ve'(\mu)}\to 0$ as $ \ve' \to 0 $. So (\ref{0smallve}) forces  again  $ \tilde  f^{\mu}$ to be 0 for every $ 0\ne \mu \in \CC $ sufficiently small, and consequently  $ f=0 $. This concludes the proof of Theorem \ref{uniqueness} in the case considered in  Subsection \ref{regdegen}.
\subsection{The case when $ \g $ is not MW}\label{notmw}$  $

Assume finally  that $ \g $ is not MW. Following \cite{MRAnn}, we then define a new Lie algebra 
\begin{eqnarray}\label{}
\nn \h:=(\g_1\times \g_1^*)\oplus (\g_2\times\R)=\h_1\oplus\h_2,  
\end{eqnarray}
with the Lie bracket $ \h_1\oplus\h_2\to \h_2 $ given by
$$
[(V,\xi),(V' ,\xi' )] :=([V,V' ] ,\xi' (V)-\xi(V' )). 
$$
The Lie algebra $ \h $ is again nilpotent of step 2. One easily sees (c.f. \cite{MRAnn}) that if $ l=(\mu,\la)\in \h_2^*=\g_2^*\times\R $, with $ \la\ne 0  $, then $\om_l $ is non-degenerate on $ \h_1  $, so that $ \h $ and the corresponding group $ H=\exp\h $ is MW.

As for $ G $, we use exponential coordinates for $ H $, so that $ H=\h $ as the underlying manifold,  endowed with the Baker-Campbell-Hausdorff product. 

We embed $ G $ into $ H $ by means of the mapping $ (x,u)\to ((x,0),(u,0)) $ and define the closed abelian subgroup $ M $ of $ H $ by
\begin{eqnarray}\label{}
\nn M&=\{((0,\xi),(0,s)): \xi\in \g_1^*,s\in\R\}.
\end{eqnarray}
Then
$$
 H=M\cdot G, 
$$
and the mapping $ (m,g)\mapsto mg $ is a diffeomorphism from $ M\times G $ onto $ H $. If $ V $ is  a left-invariant 
vector field on $ G $, we denote by $ \tilde V $ its lift to $ H $, i.e.,
\begin{eqnarray}\label{}
\nn \tilde V F(h)&=&\frac{d}{dt}F(h\,\exp{ tV})|_{t=0},\quad  h\in H. 
\end{eqnarray}
If $ F:H\to \C $ is a function on $ H $ and if $ m\in M $, we define $ F_m:H\to \C $ by
$$
 F_m(h):=F(mh),\quad   h\in H.  
$$
Then clearly for any smooth function $ F:H\to \C $
\begin{eqnarray}\label{}
\nn \tilde V F(mg)&=&V F_m(g) \quad \mbox{for all} \  (m,g)\in M\times G. 
\end{eqnarray}
Let now 
$$
 L_A=\sum_{j,k}^{m} a_{jk}V_jV_k 
$$
on $ G $ be given as before. Then its lift $ \tilde L_A=\sum_{j,k}^{m} a_{jk}\tilde V_j\tilde V_k $ on $ H $ satisfies
\begin{eqnarray}\label{liftonH}
\nn (\tilde L_AF)(mg)&=&(L_AF_m)(g)\quad \mbox{for all} \  (m,g)\in M\times G. 
\end{eqnarray}
Assume now that the function $ f$ on  $  G $ satisfies the estimates (\ref{woest}),(\ref{Loest}) in Theorem \ref{uniqueness}, i.e.,
\begin{eqnarray}\label{}
\label{woesttt} \val {f(x,u)}&\leq& C e^{{-\frac{\val x^{2}}{4a}}}e^{-\de \val u}\\
\label{Loesttt}\val{(e^{i T L_A}f)(x,u)}&\leq&Ce^{{-\frac{\val x^{2}}{4b}}}e^{-\de \val u} , x\in\g_1,u\in\g_2.
\end{eqnarray}
We can define the lift $ \tilde f $ of $ f $ on $ H=MG $ by putting 
\begin{eqnarray}\label{deftif}
 \tilde f((\xi;s)\cdot (x,u))&=&e^{-\frac{\val\xi^{2}}{4a}}e^{-\de\val s}f(x,u). 
\end{eqnarray}
Here $ (\xi;s) $ is a short-hand writing for $ ((0,\xi), (0,s))\in M $ and $ (x,u) $ stands for $ ((x,0),(u,0)) $.

Moreover we shall identify $ \g_1\times \g_2 $ with $ \R^{m}\times\R^{l} $ by means of the mapping
\begin{eqnarray}\label{}
\nn (x,u)&\mapsto& \sum_{j=1}^{m}x_j V_j+\sum_{s=1}^{l}u_s U_s 
\end{eqnarray}
(compare Section \ref{intro}), and similarly we shall identify $ \g_1^* $ with $ \R^{m} $ by means of the dual basis to $ V_1,\cdots, V_m $. This allows us to assume that $ H=(\R^{m}\times \R^{m})\oplus(\R^{l}\times\R),$ and  that $ \val{\cdot } $ is  the Euclidean norm in (\ref{deftif}). 

Observe that 
\begin{eqnarray}\label{}
\nn (\xi;s)\cdot (x,u)&=&(x,\xi,u, s-\frac{1}{2}\xi\cdot x),
\end{eqnarray}
so that in exponential coordinates for $ H $
\begin{eqnarray}\label{tildefexp}
\nn \tilde f(x,\xi,u,s)&=&e^{-\frac{\val\xi^{2}}{4a}}e^{-\de\val {s+\frac{1}{2}\xi\cdot x}}f(x,u). 
\end{eqnarray}
Since
$$
\val{s+\frac{1}{2}\xi\cdot x}\geq \val s-\frac{1}{4}(\val\xi^{2}+\val x^{2}), 
$$
the relation (\ref{woesttt}) implies that 
\begin{eqnarray}\label{newest}
\nn \val{\tilde f(x,\xi,u,s)}&\leq&C e^{-\frac{\val x^{2}+\val \xi^{2}}{4a' }}e^{-\de\val{(u,s)}},  
\end{eqnarray}
with $ a' :=\frac{a}{1-\de a} $ if $ \de >0$ is small. Similarly (\ref{Loesttt}) implies that
\begin{eqnarray}\label{}
\nn \val{e^{i T \tilde L_A}\tilde f((\xi,s))\cdot(x,u)}&=&\val{(e^{it L_A} \tilde f_{(\xi,s)})(x,u)} \\
\nn &=&e^{-\frac{\val\xi^{2}}{4a}}e^{-\de\val s}\val{(e^{it L_A }f)(x,u)}\\
\nn &\leq&C e^{-\frac{\val\xi^{2}}{4a}}e^{-\frac{\val x^{2}}{4b}}e^{-\de \vert (u,s)\vert}.
\end{eqnarray}

{\bf Case 1: $ a\leq b $.} In this case 
\begin{eqnarray}\label{}
\nn \val{e^{i T \tilde L_A}\tilde f((\xi,s))\cdot(x,u)}&\leq&C e^{-\frac{\val\xi^{2}+\val \xi^{2}}{4b' }}e^{-\de \val{ (u,s)}},
\end{eqnarray}
with $ b' :=\frac{ b}{1-\de b}>b $, if $ \de $  is small enough. Now if $ ab<\noop A^{2 }T^{2} $, then we can choose $ \de  $ so small in (\ref{woest}), (\ref{Loest}) that $ a' b' <\noop A^{2}T^{2} $. Since $ H $ is MW, by what   has already been proved, we conclude that $\tilde f=0  $ on $ H $ and hence $ f=0 $ on $ G $. 

\medskip
{\bf Case 2: $ b<a $.}
This case can be reduced to the preceding one $ a<b $. Just replace $ f $ by $ g:=e^{iT L_A}f $ and $ T $ by $ -T $, so that $ e^{-iT L_A}g=f $. Apply the previous case to $ g $, which interchanges the roles of $ a $ and $ b $. Hence $ g=0 $ and finally then $ f=0,$ too.

\medskip
This concludes the proof of Theorem \ref{uniqueness}.

\medskip

Jean Ludwig, {\it Laboratoire de Math\'ematiques et Applications de Metz UMR 7122, Universit\'e de Lorraine, \^{i}le du Saulcy, 57045 Metz CEDEX
01, France. E-mail: jean.ludwig@univ-lorraine.fr}\\

Detlef M\"uller, {\it Mathematisches Seminar, C.A.-Universit\"at Kiel,
Ludewig-Meyn-Str.4, D-24098 Kiel, Germany. E-mail: mueller@math.uni-kiel.de }.


\begin{thebibliography}{9999}
\bibitem{BDJ} A. Bonami, B. Demange, P. Jaming. Hermite functions and uncertainty principles for
the Fourier and the windowed Fourier transforms, Rev. Mat. Iberoamericana 19,1 (2006)
23--55.
\bibitem{CF1} M. Cowling, J. F. Price. Generalizations of Heisenberg's inequality. Harmonic Analysis
(Cortona, 1982) Lecture Notes in Math. 992 (1983) 443--449, Springer, Berlin.
\bibitem{CEKKPV} M. Cowling, L. Escauriaza, C.E. Kenig, G. Ponce, L. Vega. The Hardy uncertainty principle revisited, preprint 2010
\bibitem{EKPV3} L. Escauriaza, C.E. Kenig, G. Ponce, L. Vega. The sharp Hardy uncertainty principle for Schr\"odinger evolutions. Duke Math. J. 155, 1 (2010) 163--187.
\bibitem{EKPV2} L. Escauriaza, C.E. Kenig, G. Ponce, L. Vega. Hardy's uncertainty principle, convexity
and Schr\"odinger evolutions. J. Eur. Math. Soc. 10, 4 (2008) 883--907.
\bibitem{EKPV1} L. Escauriaza, C.E. Kenig, G. Ponce, L. Vega. On uniqueness properties of solutions of Schr\"odinger equations. Comm. PDE. 31, 12 (2006) 1811--1823.
\bibitem{Hardy} G. H. Hardy, A theorem concerning Fourier transform. Journal London Math. Soc. 8,
(1933) 227--231.
\bibitem{Howe} R. Howe.  The oscillator semigroup. The mathematical heritage of Hermann Weyl (Durham, NC, 1987), 61--132, Proc. Sympos. Pure Math., 48, Amer. Math. Soc., Providence, RI, 1988. 

\bibitem{MRIn} D. M\"uller, F.  Ricci. Analysis of second order differential operators on Heisenberg groups. I. Invent. Math. 101 (1990), no. 3, 545--582. 
\bibitem{MRAnn} D. M\"uller, F. Ricci.  Solvability for a class of doubly characteristic differential operators on $2$-step nilpotent groups. Ann. of Math. (2) 143 (1996), no. 1, 1--49. 
\bibitem{MRMaAn} D. M\"uller, F. Ricci. Solvability for a class of non-homogeneous differential operators on two-step nilpotent groups. Math. Ann. 304 (1996), no. 3, 517--547. 
\bibitem{MRAnMa} D. M\"uller, F. Ricci.  Solvability of second-order left-invariant differential operators on the Heisenberg group satisfying a cone condition. J. Anal. Math. 89 (2003), 169--197.

\bibitem{MD} D. M\"uller. Local solvability of linear differential operators with double characteristics. II. Sufficient conditions for left-invariant differential operators of order two on the Heisenberg group. J. Reine Angew. Math. 607 (2007), 1--46. 

\bibitem{ST} S. Ben Sa\"{i}d, S.  Thangavelu. Uniqueness of solutions to the Schr\"odinger equation on the Heisenberg group. Preprint 2011.
\bibitem{VSC} N. Th. Varopoulos, L.  Saloff-Coste, T.  Coulhon.
Analysis and geometry on groups. 
Cambridge Tracts in Mathematics, 100. Cambridge University Press, Cambridge, 1992. 
\end{thebibliography}
 \end{document}